\numberwithin{equation}{section}
\numberwithin{thm}{section}
\def\HBU{\textup{\textsf{HBU}}}
\begin{document}
\title{Reverse Mathematics and parameter-free Transfer}

\author{Benno van den Berg}
\address{Institute for Logic, Language, and Computation, Universiteit van Amsterdam, The Netherlands}
\email{bennovdberg@gmail.com}
\author{Sam Sanders}
\address{Department of Mathematics, TU Darmstadt, Germany \& School of Mathematics, University of Leeds, UK}
\email{sasander@me.com}
\begin{abstract}
Recently, conservative extensions of Peano and Heyting arithmetic in the spirit of Nelson's axiomatic approach to Nonstandard Analysis, have been proposed.
In this paper, we study the \emph{Transfer} axiom of Nonstandard Analysis restricted to formulas \emph{without} parameters.
Based on this axiom, we formulate a \emph{base theory} for the \emph{Reverse Mathematics} of Nonstandard Analysis and prove some natural reversals, 
and show that most of these equivalences do not hold in the absence of parameter-free \emph{Transfer}.    
\end{abstract}
%

\maketitle


\vspace{-0.7cm}
\section{Introduction}\label{intro}
Recently, conservative extensions of Peano and Heyting arithmetic based on Nonstandard Analysis have been introduced (\cites{benno,fega}).
Use is made of Nelson's \emph{axiomatic} approach to Nonstandard Analysis, pioneered in \emph{internal set theory} (\cite{wownelly}).  In Nelson's framework, the language is extended with a new
unary predicate `st$(x)$', read as `$x$ is standard', governed by three new axioms, namely \emph{Idealization}, \emph{Transfer}, and \emph{Standardization}.
In the setting of \cite{benno}, \emph{Transfer} corresponds to:
\be\label{fulltransfer}
(\forall^{\st}\underline{t})\big[(\forall^{\st}\underline{x})\varphi(\underline{t},\underline{x})\di (\forall \underline{x})\varphi(\underline{t},\underline{x})\big] \tag{$\TPA$}
\ee
where $\varphi(\underline{t},\cdot)$ is internal, i.e.\ without the new predicate `st', and involves no parameters but the (standard) ones shown, namely $\underline{t}$.

\medskip

The authors of \cite{benno} conjectured that adding \ref{fulltransfer} to their version of Peano arithmetic results in a conservative extension (See \cite{benno}*{p.\ 1992}).
Although this conjecture turns out to be wrong in general as shown in Section \ref{conj} (and previously in \cite{fega}), disallowing the parameters $\underline{t}$ in \ref{fulltransfer} \emph{does} result in the envisioned conservative extension.
In particular, consider the following version of \ref{fulltransfer}:
\be\label{pfreetransfer}
(\forall^{\st}\underline{x})\varphi(\underline{x})\di (\forall \underline{x})\varphi(\underline{x}) \tag{$\PFTPA$},
\ee
where the internal formula $\varphi(\underline{x})$ does not involve parameters, i.e.\ all variables are shown.
In Section \ref{param}, we show that \ref{pfreetransfer} gives rise to a conservative extension of various systems of arithmetic.
In this way, we partially answer a question by Avigad from \cite{avi3}*{p.\ 39}, namely `how much \emph{Transfer}' one can conservatively add.  

\medskip

Furthermore, we establish in Section \ref{mikeh} that \ref{pfreetransfer} is extremely useful in proving \emph{Reverse Mathematics}-style equivalences in Nonstandard Analysis.
The reader is referred to \cites{simpson2,simpson1} for an overview of Friedman's program Reverse Mathematics, which we also briefly introduce in Section~\ref{RM}.
In the latter section, we also discuss the results from \cite{dagsam} which suggest that the Reverse Mathematics of Nonstandard Analysis is \emph{fundamentally} different from the classical variety.  
In other words, the Reverse Mathematics of Nonstandard Analysis is a subject in its own right.  

\medskip

We show in Section \ref{tonny} that $\PFTPA$ is essential in establishing some natural equivalences involving comprehension in Nonstandard Analysis, while Section \ref{kext} focusses on the equivalence between internal (i.e.\ not involving Nonstandard Analysis) and external (i.e.\ involving Nonstandard Analysis) statements.
Thus, our results differ substantially from those proved in \cite{keisler1}:  We prove \emph{equivalences} between external and internal statements over a suitable base theory, while Keisler formulates systems of nonstandard arithmetic conservative over the Big Five.  
\section{Preliminaries}
\subsection{Nonstandard Peano arithmetic}\label{base}
In this section, we briefly explain the nonstandard version of Peano Arithmetic as introduced and studied in \cite{benno}.

\medskip

Our starting point will be the system $\epa$ of Peano arithmetic in all finite types, as formalised in \cite[\S 3.3, p.\ 48]{kohlenbach3}.  This is the system called $\epazero$ in \cite{troelstra73} and $\epaarrow$ in \cite{troelstravandalen88b}.
As to some basic properties of $\epa$, only equality of natural numbers is a primitive notion; Equality at higher types is defined extensionally and we have axioms stating that extensional equality is a congruence. 
In addition, our version of $\epa$ does not include product types. 
The price we have to pay for making this choice is that we often end up working with tuples of terms and variables of different types and we will have to adopt some conventions for how these ought to be handled. Fortunately, there are some well-known standard conventions here which we will follow (See \cite{kohlenbach3,troelstra73} or \cite{benno}), as discussed in Notation \ref{klaf} below.

\medskip

The first nonstandard system that we will consider is $\epast$:  The language of $\epast$ is obtained from that of $\epa$ by adding unary predicates $\st^\sigma$ as well as two new quantifiers $\forallst x^\sigma$ and $\existsst x^\sigma$ for every type $\sigma \in \T$. Formulas in the old language of $\epa$, i.e.\ those not containing these new symbols, we will call \emph{internal}; By contrast, general formulas from $\epast$ will be called \emph{external}.  To distinguish clearly between internal and external formulas, we will adopt the following:
\begin{quote}
{\large \sc{Important convention:}} We follow Nelson \cite{wownelly} in using small Greek letters to denote internal formulas and capital Greek letters to denote formulas which can be external.
\end{quote}
Functionals are often assigned Greek letters, but this will not cause confusion.  Furthermore, the (possibly external) $\varphi^{\st}$ is defined from $\varphi$ by appending `st' to all quantifiers (except bounded number quantifiers).

\medskip

The system $\epast$ is $\epa$ plus the basic axioms $\EQ$, $ \Tst$, and  $\IA^{\st{}}$.
\begin{defi}[Basic axioms]\label{defke}~
\begin{itemize}
\item The axiom $\EQ$ stands for the defining axioms of the external quantifiers:
\[
\forallst x \, \Phi(x)  \leftrightarrow   \forall x \, (\, \st(x)\rightarrow\Phi(x) \, ),\textup{ and } \existsst x \, \Phi(x)  \leftrightarrow  \exists x \, (\, \st(x)\wedge\Phi(x) \, ).
\]
\item The axiom $\Tst$ consists of:
\begin{enumerate}
\item The axioms $\st(x) \land x = y \to \st(y)$,
\item The axiom $\st(t)$ for each closed term $t$ in $\T$,
\item The axioms $\st(f)\wedge\st(x)\rightarrow\st(fx)$.
\item The axiom $\st^0(x) \land y \leq x \to \st^0(y)$.
\end{enumerate}
\item The axiom $ \IA^{\st{}}$ is the external induction axiom:
\be\tag{$\IA^{\st{}}$}\label{jax}
\big(\Phi(0)\wedge\forallst x^0 (\Phi(x)\rightarrow\Phi(x+1) )\big)\rightarrow\forallst x^0 \Phi(x).
\ee
\end{itemize}
\end{defi}
In $\EQ$ and $\IA^{\st{}}$, the expression $\Phi(x)$ is an arbitrary external formula in the language of $\epast$, possibly with additional free variables. Besides external induction in the form of $\IA^{\st{}}$, the system $\epast$ also contains the internal induction axiom
\[ \varphi(0) \land \forall x^0 \, ( \, \varphi(x) \to \varphi(x+1) \, ) \to \forall x^0 \, \varphi(x), \]
simply because this is part of $\epa$.  Following our above convention, it is clear that this principle applies to internal formulas only.

\medskip

It is easy to see that $\epast$ is a conservative extension of $\epa$: One gets an interpretation of $\epast$ in $\epa$ by declaring everything to be standard. For more information and results on $\epast$, we refer to \cite{benno, sambon, sambrouw}.
Below, we need the following fragment of the axiom of choice. 
\bdefi\label{QFAC} The axiom $\QFAC$ consists of the following for all finite types $\sigma, \tau$:
\be\tag{$\QFAC^{\sigma,\tau}$}
(\forall x^{\sigma})(\exists y^{\tau})A(x, y)\di (\exists Y^{\sigma\di \tau})(\forall x^{\sigma})A(x, Y(x))
\ee  
\edefi
We finish this section with some remarks on notation.
\begin{nota}[Finite sequences in $\epa$]\label{klaf}\rm
An important notational matter is that inside $\epa$, we should be able to talk about finite sequences of objects of the same type, \emph{not} to be confused with the metalinguistic notion of tuple we mentioned earlier.
There are at least two ways of approaching this:  First of all, as in \cite{benno}, we could extend $\epa$ with types $\sigma^*$ for finite sequences of objects of type $\sigma$, add constants for the empty sequence and the operation of prepending an element to a sequence, as well as a list recursor satisfying the expected equations.  

\medskip

Secondly, as in \cite{bennoandeyvind}, we could exploit the fact that one can code finite sequences of objects of type $\sigma$ as a single object of type $\sigma$ in such a way that every object of type $\sigma$ codes a sequence. Moreover, the standard operations on sequences, such as extracting their length or concatenation, are given by terms in G\"odel's $T$. We choose the second option: this makes scaling down $\epa$ to weaker systems easier.

\medskip

In fact, finite sequences are really stand-ins for finite sets in our setting, and we will often use set-theoretic notation for this reason, i.e.\ $\emptyset$ for the code of the empty sequence, $\cup$ for concatenation and $\{ x \}$ for the finite sequence of length $1$
with sole component $x$. For $x$ and $y$ of the same type we will write $x \in y$ if $x$ is equal to one of the components of the sequence coded by $y$.  Furthermore, for a sequence $\alpha^{0\di \rho}$ and $k^{0}$, the finite sequence $\overline{\alpha}k$ is exactly $\langle \alpha(0), \alpha(1), \dots, \alpha(k-1) \rangle$.  
Finally, if $Y$ is of type $\sigma \to \tau$ and $x$ is of type $\sigma$ we define $Y[x]$ of type $\tau$ as $Y[x] := \cup_{ f \in Y} f(x)$.
\end{nota}

\begin{nota}[Equality]\label{equ}\rm
The system $\textsf{E-PA}^{\omega}$ includes equality between natural numbers `$=_{0}$' as a primitive.  Equality `$=_{\tau}$' and inequality $\leq_{\tau}$ for $x^{\tau},y^{\tau}$ is:
\be\label{aparth}
[x=_{\tau}y] \equiv (\forall z_{1}^{\tau_{1}}\dots z_{k}^{\tau_{k}})[xz_{1}\dots z_{k}=_{0}yz_{1}\dots z_{k}],
\ee
\be\label{aparth1}
[x\leq_{\tau}y] \equiv (\forall z_{1}^{\tau_{1}}\dots z_{k}^{\tau_{k}})[xz_{1}\dots z_{k}\leq_{0}yz_{1}\dots z_{k}],
\ee
if the type $\tau$ is composed as $\tau\equiv(\tau_{1}\di \dots\di \tau_{k}\di 0)$.
In the spirit of Nonstandard Analysis, we define `approximate equality $\approx_{\tau}$' as follows (with the type $\tau$ as above):
\be\label{aparth2}
[x\approx_{\tau}y] \equiv (\forall^{\st} z_{1}^{\tau_{1}}\dots z_{k}^{\tau_{k}})[xz_{1}\dots z_{k}=_{0}yz_{1}\dots z_{k}]
\ee
As suggested by the `\textsf{E}', $\textsf{E-PA}^{\omega}$ includes the \emph{axiom of extensionality} as follows:
\be\label{EXT}\tag{\textsf{E}}  
(\forall  x^{\rho},y^{\rho}, \varphi^{\rho\di \tau}) \big[x=_{\rho} y \di \varphi(x)=_{\tau}\varphi(y)   \big].
\ee
However, the so-called axiom of \emph{standard} extensionality \eqref{EXT}$^{\st}$, defined as:
\[
(\forall^{\st}  x^{\rho},y^{\rho}, \varphi^{\rho\di \tau}) \big[x\approx_{\rho} y \di \varphi(x)\approx_{\tau}\varphi(y)   \big],
\]
is not included in the systems from \cite{benno}.  The reason is that term extraction as in Theorem \ref{consresultcor} does not hold in the presence of \eqref{EXT}$^{\st}$ (See \cite{benno}*{p.\ 1973}).
\end{nota}

\subsection{Introducing Reverse Mathematics}\label{RM}
Reverse Mathematics (RM) is a program in the foundations of mathematics initiated around 1975 by Friedman (\cites{fried,fried2}) and developed extensively by Simpson (\cite{simpson2}) and others.  

\medskip
  
The aim of RM is to find the \emph{minimal} axioms needed to prove a statement of \emph{ordinary} mathematics, i.e.\ dealing with countable or separable objects.   
The classical\footnote{In \emph{Constructive Reverse Mathematics} (\cite{ishi1}), the base theory is based on intuitionistic logic.} \emph{base theory} $\RCA_{0}$ of `computable\footnote{The system $\RCA_{0}$ consists of induction $I\Sigma_{1}$, and the {\bf r}ecursive {\bf c}omprehension {\bf a}xiom $\Delta_{1}^{0}$-CA.} mathematics' is always assumed.
Thus, the aim of RM may be described as follows:  
\begin{quote}
\emph{The aim of \emph{RM} is to find the minimal axioms $A$ such that $\RCA_{0}$ proves $ [A\di T]$ for statements $T$ of ordinary mathematics.}
\end{quote}
Surprisingly, once the minimal axioms $A$ have been found, we almost always have $\RCA_{0}\vdash [A\asa T]$, i.e.\ not only can we derive the theorem $T$ from the axioms $A$ (the `usual' way of doing mathematics), we can also derive the axiom $A$ from the theorem $T$ (the `reverse' way), and hence the name `Reverse Mathematics'.  \medskip

Perhaps even more surprisingly, in the majority\footnote{Exceptions are classified in the so-called Reverse Mathematics zoo (\cite{damirzoo}).
} 
of cases, for a statement $T$ of ordinary mathematics, either $T$ is provable in $\RCA_{0}$, or the latter proves $T\asa A_{i}$, where $A_{i}$ is one of the logical systems $\WKL_{0}, \ACA_{0},$ $ \ATR_{0}$ or $\FIVE$.  The latter together with $\RCA_{0}$ form the `Big Five' and the aforementioned observation that most mathematical theorems fall into one of the Big Five categories, is called the \emph{Big Five phenomenon} (\cite{montahue}*{p.\ 432}).  
Furthermore, each of the Big Five has a natural formulation in terms of (Turing) computability (See e.g.\ \cite{simpson2}*{I.3.4, I.5.4, I.7.5}).
As noted by Simpson in \cite{simpson2}*{I.12}, each of the Big Five also corresponds (sometimes loosely) to a foundational program in mathematics.  

\medskip

Furthermore, RM is inspired by constructive mathematics, in particular the latter's \emph{Brouwerian counterexamples} (See \cite{mandje2} for the latter).  However, in contrast to practice of adding `extra data' to obtain constructive theorems, RM studies mathematical theorems `as they stand', according to Simpson (\cite{simpson2}*{I.9}).  
However, the logical framework for RM is \emph{second-order arithmetic}, i.e.\ only natural numbers and sets thereof are available.  For this reason higher-order objects such as continuous real functions and topologies are not available directly, and are represented by so-called \emph{codes} (See e.g.\ \cite{simpson2}*{II.6.1} and \cite{mummy}), while \emph{discontinuous} functions are not available \emph{tout court}.  
Kohlenbach shows in \cite{kohlenbach4}*{\S4} that the use of codes to represent continuous functions in RM entails a \emph{slight} constructive enrichment, namely a modulus of pointwise continuity.    
He has also introduced \emph{higher-order} RM in which discontinuous functions \emph{are} present (See \cite{kohlenbach2}*{\S2})

\medskip

Next, we consider an interesting observation regarding the Big Five systems of Reverse Mathematics, namely that these five systems satisfy the strict implications:
\be\label{linord}
\FIVE\di \ATR_{0}\di \ACA_{0}\di\WKL_{0}\di \RCA_{0}.
\ee
By contrast, there are many incomparable logical statements in second-order arithmetic.  For instance, a regular plethora of such statements may be found in the \emph{Reverse Mathematics zoo} in \cite{damirzoo}.  The latter is intended as a collection of theorems which fall outside of the Big Five classification of RM.  

\medskip

Finally, it should be noted that the elegant picture as in \eqref{linord} does not extend to the RM of Nonstandard Analysis, as shown in \cite{dagsam}.  
Indeed, let  $\STP$, $\paai$, and $\Paai$ be the nonstandard counterparts of $\WKL_{0}$, $\ACA_{0}$, and $\FIVE$, as introduced in Section \ref{mikeh}. 
In contrast to \eqref{linord}, we have
\be\label{nolinord}
\Paai \not\di \STP \textup{ and } \paai \not \di \STP.  
\ee
over any reasonable base theory.   Similar results hold for $\textsf{LMP}$, the nonstandard counterpart of $\WWKL_{0}$ from \cite{pimpson}*{X.1}.
As shown in \cite{dagsam}, the non-implications from \eqref{nolinord} can be reformulated in terms of non-computability results \emph{not involving Nonstandard Analysis}, i.e.\ computability theory and Nonstandard Analysis turn out to be intimately connected.  
In fact, $\STP$ translates to the \emph{special fan functional} (See Section \ref{kext}) in computability theory, which simply computes a finite cover for Cantor space for \emph{any} (possibly discontinuous) input, while Tait's \emph{fan functional} computes such a cover \emph{only for continuous inputs}.  
Note that Kohlenbach argues in favour of the study of discontinuous functionals in his higher-order RM in \cite{kohlenbach2}.

\section{Transfer}
In this section, we prove our main result regarding \emph{Transfer}, namely that the latter limited to formulas \emph{without parameters} leads to a conservative extension of the original system from \cite{benno} based on $\epa$.  
The same holds for fragments of the latter, and we will establish in Section \ref{mikeh} that parameter-free \emph{Transfer} is an essential part of the RM of Nonstandard Analysis.  

\subsection{On a conjecture regarding Transfer}\label{conj}
In this section, we prove that the conjecture from \cite{benno}*{p.\ 1997} is incorrect (which can also be found in \cite{fega}).  First, we recall the principles $\I$, $\HACint$ and $\TPA$ from \cite{benno}. The former is a typed version of Nelson's idealisation principle
\be
 \forallst y' \, \exists x \, \forall y \in y' \, \varphi(x, y) \to \exists x \, \forallst y \, \varphi(x, y). \tag{\I}
 \ee
Furthermore, $\HACint$ is a weak (`Herbrandized') version of the axiom of choice for internal formulas:
\be
\quad  \quad \forallst x^{\sigma} \, \existsst y^{\tau} \, \varphi(x, y) \to \existsst Y^{\sigma\di \tau} \, \forallst x^{\sigma} \, \exists y^{\tau} \in Y[x] \, \varphi(x, y). \tag{\HACint}
\ee
The term `Herbrandized' is meant to indicate that the choice function $Y$ does not provide a single witness, but only a finite list of candidates;  This list contains an actual witness, but one may not be able to (effectively) find one.  
This setup is similar to the idea of a \emph{Herbrand disjunction}, hence the name. Note that $\HACint$ could also have been formulated using the normal application operation:
\be
 \forallst x \, \existsst y \, \varphi(x, y) \to \existsst Y \, \forallst x \, \exists y \in Y(x) \, \varphi(x, y). \tag{\HACint}
\ee
The two versions are clearly equivalent.  Finally, $\TPA$ is the Transfer principle:
\be\tag{\TPA}
 \forallst \tup t \, ( \, \forallst x \,  \varphi(x, \tup t) \to \forall x \, \varphi(x, \tup t) \, )
 \ee
where `$\forallst \tup t$' is supposed to quantify away all the remaining free variables in $\varphi(x, \tup t)$.

\medskip

It is one of the main results of \cite{benno} that the system $\epast + \I + \HACint$ is conservative over $\epa$. As noted in the introduction, it was conjectured in \cite{benno} that this was not the strongest result possible, and that even $\epast + \I + \HACint + \TPA$ is conservative over $\epa$. This conjecture is unfortunately not true, as was independently observed by the two authors of this paper.  
To see this, consider 
\be\label{WRONG}
\forall x^{0} \, \exists y^{0} \, \varphi(x,y) \di \exists f^{1} \, \forall x^{0} \, \exists y \in f(x) \, \varphi(x,y),
\ee
where $\varphi$ is an internal formula containing only the variables $x$ and $y$ free.
This statement, with all shown quantifiers relativized to `st', easily follows from $\HACint$, and applying $\TPA$ immediately yields \eqref{WRONG}.
However, we have the following lemma.
\begin{lem}\label{poosset} There is a formula $\varphi$ with two free variables $x$ and $y$ such that
\[
\epa \not\vdash \forall x^{0} \, \exists y^{0} \, \varphi(x,y) \di \exists f^{1} \, \forall x^{0} \, \exists y \in f(x) \, \varphi(x,y).
\]
\end{lem}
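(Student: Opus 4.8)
The plan is to refute the implication semantically: I would produce a single model of $\epa$ in which, for a well-chosen internal $\varphi$, the left-hand side holds but the right-hand side fails. Note that the right-hand side is a Herbrandised choice principle — really a \emph{bounding} principle — asserting a type-one function $f$ whose finite lists $f(x)$ always contain a witness $y$. The idea is to pick $\varphi$ so that its witnesses are forced to grow faster than every recursive function, and then to interpret the type-one quantifier over recursive functions only: then no such $f$ can exist, while the purely arithmetical left-hand side remains true because it only talks about the ground type.

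Concretely, I would let $\varphi(x,y)$ be the internal (first-order arithmetical) formula expressing that $y$ is an upper bound for all outputs of halting computations whose index and input are at most $x$; using Kleene's primitive recursive predicates,
\[
\varphi(x,y) \;:\equiv\; (\forall e,i \leq x)(\forall s^{0})\big[\, T(e,i,s) \di U(e,i,s) \leq y \,\big].
\]
Two facts are then needed. First, $(\forall x)(\exists y)\varphi(x,y)$ is a true arithmetical sentence — for each $x$ only finitely many pairs $(e,i)$ are relevant, each contributing at most one output — and it is in fact provable in $\epa$, since $\epa$ contains full induction and hence proves $\Sigma^{0}_{1}$-bounding. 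Second, writing $g(x)$ for the least such bound, the function $g$ \emph{dominates every total recursive function}: if $h = \{e_{0}\}$ is total recursive, then the pair $(e_{0},x)$ is relevant for all $x \geq e_{0}$, whence $g(x) \geq \{e_{0}\}(x) = h(x)$ for all $x \geq e_{0}$. In particular $g$ is not itself recursive.

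To conclude, I would argue in a model of $\epa$ all of whose type-one objects are total recursive functions, such as the hereditarily effective operations; its ground type is the standard $\mathbb{N}$, so the true left-hand side holds there. Suppose the right-hand side held, witnessed by a recursive $f$. Since every $y \in f(x)$ with $\varphi(x,y)$ satisfies $y \geq g(x)$, the recursive function $h(x) := \max f(x)$ would satisfy $h(x) \geq g(x)$ for all $x$; but $g$ dominates the recursive function $h+1$, so $g(x) \geq h(x)+1 > h(x)$ for all large $x$, a contradiction. Hence the implication fails in the model, so it is not provable in $\epa$. Equivalently, and avoiding any discussion of models, one may assume $\epa$ proves the implication, deduce $\epa \vdash (\exists f^{1})(\forall x^{0})(\exists y \in f(x))\varphi(x,y)$, and apply the functional (Dialectica) interpretation of $\epa$ to extract a closed term $t$ of G\"odel's $\T$ with $(\forall x)(\exists y \in t(x))\varphi(x,y)$ true; the denotation of $t$ is recursive, again contradicting the domination property of $g$.

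The main obstacle is not the combinatorics — the dominating-function construction is routine once $\varphi$ is fixed — but the meta-theoretic input. On the model route one must check that the chosen structure genuinely satisfies $\epa$ (extensionality, the recursor axioms, and induction), since it is precisely the restriction of the type-one universe to recursive functions that does the work. On the extraction route the corresponding obstacle is the careful bookkeeping of the negative translation together with term extraction for the type-one existential, which is standard for $\epa$ but must be handled with attention to the $\Pi^{0}_{1}$ matrix $\varphi$.
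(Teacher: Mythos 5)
Your proposal is correct and is essentially the paper's own proof: the paper also takes $\varphi(x,y)$ to say that $y$ bounds all halting computations with index below $x$, assumes provability, passes to the HEO-model (whose type-one objects are exactly the total recursive functions), and derives a contradiction because $\max\{y : y \in f(x)\}$ would be a recursive function dominating all total recursive functions. The only differences are cosmetic — the paper's formula bounds $\{e\}(x)$ for $e \le x$ rather than $\{e\}(i)$ for $e,i \le x$, and your alternative term-extraction route is an additional (valid) variant the paper does not pursue.
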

\begin{proof}
Suppose the statement \eqref{WRONG} would be provable in $\epa$; then it would hold in the HEO-model of $\epa$ (See e.g. \cite{troelstra73}). That is, we fix the hereditarily recursive functions as a model of $\epa$.
Now apply \eqref{WRONG} to
\[
\varphi_{0}(x,y) :\equiv (\forall e^{0}\leq x)\big[(\exists z^{0})(\{e\}(x)=z)\di y>\{e\}(x)  \big].
\]
Here, $\{e\}(x)$ is the value of the $e$-th partial recursive function at $x$, if it exists.
The statement $\forall x^{0} \, \exists y^{0} \, \varphi_{0}(x,y) $ is provable in $\epa$, so \eqref{WRONG} would provide us with a function $f$ such that $\forall x^{0} \, \exists y \in f(x) \, \varphi_{0}(x,y)$.
Now define $g(x) = \max\{y  : y \in f(x) \}$ and note that $g$ grows faster than any total recursive function.
However, such functions do not exist in the HEO-model and we obtain a contradiction.
\end{proof}
In conclusion, the conjecture from \cite{benno}*{p.\ 1997} is wrong in light of Lemma \ref{poosset}. 

\subsection{Parameter-free Transfer}\label{param}
In the previous section, we observed that the conjecture that $\epast + \I + \HACint + \TPA$ is  conservative over $\epa$ is false. Each false conjecture has a silver lining, however, and a slightly modified version of the conjecture, suggested by the second author, \emph{is} true.  Indeed, instead of the `full' Transfer principle $\TPA$, consider the \emph{parameter-free} version
\be\tag{\PFTPA }
 \forallst \tup x \,  \varphi(\tup x) \to \forall \tup x \, \varphi(\tup x)
\ee
where $\varphi(\tup x)$ is not supposed to have any free variables besides those in $\tup x$. In this section, we prove that $\epast + \I + \HACint + \PFTPA$ \emph{is} conservative over $\epast$ using techniques similar to those in \cite{benno}.  In Section \ref{mikeh}, we show that $\PFTPA$ is highly useful for the Reverse Mathematics of Nonstandard Analysis.   

\medskip

In particular, we make use of the $\Sh$-interpretation for $\epast$ as introduced in \cite{benno}*{\S7}. In the latter, the fact was used that in $\epast$, all the logical connectives can be defined using only $\lnot, \lor, \st, \forall$.  Thus, one only has to provide the clauses for the $\Sh$-interpretation of these four connectives as in \cite{benno}*{p.\ 1989}. Of course, this makes for a slick proof of the soundness of the $\Sh$-interpretation, but to compute the $\Sh$-interpretation of a concrete formula, one really does not want to first rewrite the formula using this limited set of connectives, as practice shows that this rarely yields a manageable result.

\medskip

For this reason, in this paper, we also regard $\land, \forallst, \existsst, \exists$ as primitive and additionally determine suitable $\Sh$-interpretations for these. We will only regard implication as a defined connective, with $\Phi \to \Psi$ defined as $\lnot \Phi \lor \Psi$. As we will see, this approach does turn out to be feasible.

\medskip

We now extend the aforementioned $\Sh$-interpretation to the other connectives.

\medskip

First of all, if we define $\Phi \to \Psi$ as $\lnot \Phi \lor \Psi$, this means that, if $\forallst \tup x \, \existsst \tup y \, \varphi(\tup x, \tup y, \tup a)$ is the $\Sh$-interpretation of $\Phi(\tup a)$ and $\forallst \tup u \, \existsst \tup v \, \psi(\tup u, \tup v, \tup b)$ is the $\Sh$-interpretation of $\Psi(\tup b)$, then we have:
\[
\big( \, \Phi(\tup a) \to \Psi(\tup b) \, \big)^\Sh \quad :\equiv \quad \forallst \tup Y, \tup u \, \existsst \tup x, \tup v \, \big( \, \exists \tup y \in \tup Y[\tup x] \, \varphi(\tup x, \tup y, \tup a) \to \psi(\tup u, \tup v, \tup b) \, \big),
\]
modulo some applications of classical logic in the internal matrix, of course.

\medskip

Furthermore, the clauses for the Krivine negative translation $\Phi^\kr :\equiv \lnot \Phi_\kr$ for the extended language are:
\begin{eqnarray*}
\big(\Phi\land\Psi\big)_\kr  &:\equiv & \Phi_\kr\lor \Psi_\kr,\\
\big(\forallst z \, \Phi(x)\big)_\kr  & :\equiv  & \existsst z \, \Phi_\kr(z),\\
\big(\existsst z \, \Phi(x)\big)_\kr  & :\equiv  & \forallst z \, \lnot\lnot\Phi_\kr(z),\\
\big(\exists z \, \Phi(z)\big)_\kr  & :\equiv  & \forall z \, \lnot\lnot\Phi_\kr(z),\\
\end{eqnarray*}
This means that, if we would define:
\begin{eqnarray}
(\Phi(\tup a) \land \Psi(\tup b))^\Sh & :\equiv & \forallst \tup x, \tup u \, \existsst \tup y, \tup v \, \varphi(\tup x, \tup y, \tup a) \land \psi(\tup u, \tup v, \tup b) \notag\\
(\forallst z \, \Phi(\tup a, z))^\Sh & :\equiv & \forallst \tup x, z' \, \existsst \tup y' \, \forall z \in z' \, \exists \tup y \in \tup y' \varphi(\tup x, \tup y, z, \tup a),\label{dokio} \\
(\existsst z \, \Phi(\tup a, z))^\Sh & :\equiv & \forallst \tup X \, \existsst \tup Y, z \, \forall \tup x \in \tup X[z, \tup Y] \, \exists \tup y \in \tup Y[\tup x] \varphi(\tup x, \tup y, z, \tup a),\notag \\
(\exists z \, \Phi(\tup a, z))^\Sh & :\equiv & \forallst \tup X \, \existsst \tup Y \, \exists z \, \forall \tup x \in \tup X[\tup Y] \, \exists \tup y \in \tup Y[\tup x] \, \varphi(\tup x, \tup y, z, \tup a), \notag
\end{eqnarray}
then the soundness proof in \cite{benno}*{p.\ 1990} still works. Indeed, we only have to check that \cite{benno}*{Lemma 7.3} extends to $\land$, $\forallst$, $\existsst$ and $\exists$ as defined, and that the negative translation of the $\EQ$-axiom is provable using that very same $\EQ$-axiom in $\ehanststar$.

\medskip

However, we can do better than the above `primitive' interpretation.
Indeed, we add yet another universal quantifier $\forallststar z$ to $\epast$ with clause:
\be\label{dokio2}
(\forallststar z \, \Phi(\tup a, z))^\Sh  :\equiv  \forallst \tup x, z \, \existsst \tup y \, \varphi(\tup x, \tup y, z, \tup a).
\ee
As it turns out, our new simpler interpretation is equivalent to \eqref{dokio}.
\begin{lem}\label{lemmaonexternalunivqf}
The equivalence $ \forallst z \, \Phi(z) \leftrightarrow \forallststar z \, \Phi(z)$ is $\Sh$-interpretable.
\end{lem}
\begin{proof}
To improve readability, we will ignore the fact that we have to work with tuples and drop the underlining. We have to show that the equivalence of (\ref{dokio}) and (\ref{dokio2}), that is,
\[ \forallst x, z' \, \existsst y' \, \forall z \in z' \, \exists y \in  y' \, \varphi( x,  y, z,  a) \leftrightarrow \forallst  x, z \, \existsst y \, \varphi( x,  y, z,  a) \]
is $\Sh$-interpretable. By the soundness theorem for the $\Sh$-interpretation, it suffices to prove this equivalence in $\epast + \I + \HACint$. 

The left-to-right implication follows immediately by choosing $z' = \{ z \}$. For the right-to-left implication we use $\HACint$ to rewrite
\[ \forallst  x, z \, \existsst  y \, \varphi( x,  y, z, a) \]
as
\[ \forallst x \, \existsst Y \, \forallst z \, \existsst  y \in Y[z] \, \varphi( x,  y, z,  a). \]
Then by choosing
\[ y' = \bigcup_{z \in z'} Y[z], \]
we obtain
\[ \forallst  x, z' \, \existsst  y' \, \forall z \in z' \, \exists y \in  y' \, \varphi( x,  y, z, a), \]
as desired.
\end{proof}
In light of the previous lemma, we can use our simpler interpretation \eqref{dokio2} instead of \eqref{dokio}, leading to the following theorem.
\begin{thm}\label{consresult}
Let $\Phi(\tup a)$ be a formula in the language of $\epast$ and suppose $\Phi(\tup a)^\Sh\equiv\forallst \tup x \, \existsst \tup y \, \varphi(\tup x, \tup y, \tup a)$. If $\Delta_{\intern}$ is a collection of internal formulas and
\[ \epast + \I + \HACint + \PFTPA + \Delta_{\intern} \vdash \Phi(\tup a), \]
then \be\label{keaker}
\epa + \Delta_{\intern} \vdash\ \exists \tup t \, \forall \tup a \, \forall \tup x \, \exists \tup y\in \tup t[\tup x]\ \varphi(\tup x,\tup y, \tup a).
\ee
\end{thm}
\begin{proof}
We proceed by induction on the derivation of $\Phi(\tup a)$ in $\epast + \I + \HACint + \PFTPA + \Delta_{\intern}$.
We can re-use a number of results from \cite{benno}.
To this end, consider \cite{benno}*{Theorem 7.7}.  The latter states that whenever $\epast + \I + \HACint +  \Delta_{\intern} \vdash \Phi(\tup a)$, then there are closed terms $\tup t$ in G\"odel's $\T$ such that $\epa \vdash \forall \tup x \, \exists \tup y \in \tup t [\tup x] \varphi(\tup x, \tup y, \tup a)$. Note that $\tup t$ in no way depends on the parameters $\tup a$.
The previous derivation implies in particular that $\epa \vdash \exists \tup t \, \forall \tup a \, \forall \tup x \, \exists \tup y \in \tup t [\tup x] \, \varphi(\tup x, \tup y, \tup a)$. So whenever $\Phi(\tup a)$ is an axiom of $\epast + \I + \HACint$, we have $\epa \vdash\ \exists \tup t \, \forall \tup a \, \forall \tup x \, \exists \tup y\in \tup t[\tup x] \, \varphi(\tup x,\tup y, \tup a)$. Therefore, it only remains to consider the inference rules and the new axiom $\PFTPA$.

\medskip

With regard to the inference rules of $\epast$ in \cite{benno}, we can take a formalisation in which Modus Ponens is the only inference rule. Again, we momentarily ignore the fact that we are working with tuples. 
So if $\Phi(a)^\Sh \equiv \forallst x \, \existsst y \, \varphi(x, y, a)$ and $\Psi(b)^\Sh \equiv \forallst u \, \forallst v \, \psi(u, v, b)$, then we have to prove that from
\[
\epa \vdash \exists  X, V \, \forall a, b \, \forall Y, u \exists x \in X[Y, u] \exists v \in V[Y, u] \, \big( \, \exists y \in Y[x] \, \varphi(x, y, a) \to \psi(u, v, b) \, \big)
\]
and from
\[
\epa \vdash \exists t \, \forall a , x \, \exists y \in t[x] \, \varphi(x, y, a),
\]
follows that
\[
\epa \vdash \exists s \, \forall b,  u \, \exists v \in s[u] \, \psi(u, v, b).
\]
Proving this is easy: Reasoning in $\epa$, let $X, V, t$ be as in the premises and define $s$ such that $s[u] = V[t, u]$.

\medskip

Finally, it remains to consider $\PFTPA$. For this, we need to show that
\[ \epa \vdash \exists t \, \exists y \in t \, \big( \, \varphi(y) \to \forall z \, \varphi(z) \, \big), \]
or
\[ \epa \vdash \exists s \, \big( \, \varphi(s) \to \forall z \, \varphi(z) \, \big). \]
But the latter is a classical tautology, known as the so-called Drinker's Principle.
\end{proof}

\begin{rem}\label{pfessential}\rm
Note that in the last step of the proof, if $\varphi$ had an additional standard parameter $x$, we would need to prove
\be\label{towald}
\epa \vdash \exists t \, \forall x \, \exists y \in t[x] \, \big( \, \varphi(y, x) \to \forall z \, \varphi(z, x) \, \big).
\ee
Here, we would need the provable existence of a kind of `Herbrandized Skolem function'. This seems impossible without some form of choice or comprehension. Thus, the big difference between general and parameter-free transfer seems to derive from the fact that the provable existence of genuine Skolem functions is not so harmless (indeed, it is a kind of axiom of choice), while the provable existence of Skolem constants is just logic.
\end{rem}

\begin{cor}\label{conservativity}
The system $\epast + \I + \HACint + \PFTPA$ is conservative over $\epa$. 
In fact, if we have 
\[ 
\epast + \I + \HACint + \PFTPA \vdash \forallst \tup x \, \exists \tup y \, \varphi(\tup x, \tup y, \tup a), 
\] 
then $\epa \vdash \forall \tup x \, \exists \tup y \, \varphi(\tup x, \tup y, \tup a)$.
\end{cor}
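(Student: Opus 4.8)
The plan is to obtain both assertions as essentially immediate consequences of Theorem \ref{consresult}, treating the second (``in fact'') statement as the substantial one and reading off plain conservativity over $\epa$ as its degenerate case. First I would note that the bare conservativity claim is exactly the instance of the second statement in which the tuples $\tup x$ and $\tup y$ are empty: an arbitrary internal formula $\varphi(\tup a)$ then literally coincides with $\forallst \tup x \, \exists \tup y \, \varphi(\tup x, \tup y, \tup a)$, so it suffices to prove the displayed $\forall^{\st}\exists$-statement and let internal sentences fall out as a special case.

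For that statement, set $\Phi(\tup a) :\equiv \forallst \tup x \, \exists \tup y \, \varphi(\tup x, \tup y, \tup a)$ and compute $\Phi(\tup a)^\Sh$. The crucial observation is that $\exists \tup y \, \varphi(\tup x, \tup y, \tup a)$ is \emph{internal}, since $\varphi$ is internal and $\exists \tup y$ is the ordinary (unstarred) existential quantifier; hence it is its own $\Sh$-interpretation, carrying empty tuples of witnesses (see \cite{benno}). Prefixing the external universal quantifier $\forallst \tup x$, which by Lemma \ref{lemmaonexternalunivqf} we may read as $\forallststar \tup x$ and interpret through the simple clause \eqref{dokio2}, merely appends $\tup x$ to the universal standard block. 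Consequently $\Phi(\tup a)^\Sh$ has universal standard block $\tup x$, an \emph{empty} existential standard block, and internal matrix $\exists \tup y \, \varphi(\tup x, \tup y, \tup a)$.

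I would then apply Theorem \ref{consresult} with $\Delta_{\intern} = \emptyset$. Because the existential standard block of $\Phi(\tup a)^\Sh$ is empty, the Herbrandizing term tuple $\tup t$ produced by the theorem is trivial and the guard $\exists \,\cdot\, \in \tup t[\tup x]$ is vacuous, so the conclusion collapses to $\epa \vdash \forall \tup a \, \forall \tup x \, \big( \exists \tup y \, \varphi(\tup x, \tup y, \tup a) \big)$. Dropping the harmless leading universal closure over the free parameters $\tup a$, this is precisely $\epa \vdash \forall \tup x \, \exists \tup y \, \varphi(\tup x, \tup y, \tup a)$, as required; and the empty-tuple instance noted above yields the conservativity statement.

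The only genuine subtlety, and the step I would be most careful about, is the bookkeeping of the second paragraph: one must recognise that an ordinary existential quantifier applied to an internal matrix contributes \emph{nothing} to the standard existential block of the $\Sh$-interpretation. Once this is seen, the absence of Herbrandized Skolem witnesses is exactly what lets the conclusion of Theorem \ref{consresult} degenerate to a clean $\forall\exists$-statement, matching the heuristic of Remark \ref{pfessential} that no choice or comprehension is needed here; the remainder of the argument is a routine unwinding of the interpretation clauses.
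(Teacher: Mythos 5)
Your proposal is correct and is essentially identical to the paper's own proof: the paper also derives the corollary from Theorem \ref{consresult} together with the observation that $(\forallst \tup x \, \exists \tup y \, \varphi(\tup x, \tup y, \tup a))^\Sh$ is just $\forall \tup x \, \exists \tup y \, \varphi(\tup x, \tup y, \tup a)$ for internal $\varphi$, so that the existential standard block is empty and the Herbrandizing terms $\tup t$ are vacuous. Your more detailed bookkeeping (reading $\forallst \tup x$ via Lemma \ref{lemmaonexternalunivqf} and clause \eqref{dokio2}, and recovering plain conservativity over $\epa$ as the empty-tuple instance) is exactly what the paper's one-line proof leaves implicit.
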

\begin{proof}
Follows from the previous theorem and the fact that $(\forallst \tup x \, \exists \tup y \, \varphi(\tup x, \tup y, \tup a))^\Sh$ is $\forall \tup x \, \exists \tup y \, \varphi(\tup x, \tup y, \tup a)$ for internal $\varphi(\tup x, \tup y, \tup a)$.
\end{proof}
Clearly, the above proof does not need the full strength of Peano arithmetic.  Indeed, careful inspection of the above proofs reveals that instead of Peano Arithmetic, we can use \emph{Elementary Function Arithmetic} ($\efa$), aka $I\Delta_{0}+\usftext{EXP}$.

\medskip

Thus, let $\eefa^{\omega}_{\st}$ be the analogue of $\epast$ with first-order strength $\efa$, similar to Avigad's \textsf{ERA}$^{\omega}$ or Kohlenbach's system \textsf{E-G$_{3}$A$^{\omega}$} (See \cite{avi3}*{p.\ 31} and \cite{kohlenbach3}*{p.\ 55}).
In particular, the system $\eefa^{\omega}_{\st}$ is \textsf{E-G$_{3}$A$^{\omega}$} plus $\EQ$ and {$\Tst$}.  
\begin{cor}\label{conservativity2}
The system $\textup{$\eefa^{\omega}_{\st}$} + \I + \HACint + \PFTPA$ is conservative over $\textup{\eefa$^{\omega}$}$.  If we add $\QFAC^{1,0}$, we obtain a $\Pi_{2}^{0}$-conservative extension of $\efa$.
\end{cor}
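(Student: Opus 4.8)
The plan is to obtain both assertions by scaling the $\epa$-arguments down to the $\efa$-level, since none of them uses more than elementary resources. For the first assertion I would re-run Lemma~\ref{lemmaonexternalunivqf}, Theorem~\ref{consresult} and Corollary~\ref{conservativity} essentially verbatim, systematically replacing $\epa$ by $\eefa^{\omega}$ and $\epast$ by $\eefa^{\omega}_{\st}$. The reason this succeeds is that the only ingredients of those proofs are: the elementary manipulation of finite sequences coded by terms of G\"odel's $\T$ (these terms live at low type and are available at the $\eefa^{\omega}$-level); the term-definability used in the Modus Ponens step; the Drinker's Principle, which is a pure propositional tautology and therefore costs nothing; and the soundness of the $\Sh$-interpretation. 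For this last ingredient one needs the scaled-down analogue of \cite{benno}*{Theorem 7.7} with $\eefa^{\omega}_{\st}$ and $\eefa^{\omega}$ in place of $\epast$ and $\epa$, where the interpretation of $\EQ$ and $\Tst$ is literally unchanged and any induction is matched against the internal induction already present in $\eefa^{\omega}$. Granting this, the induction on derivations in Theorem~\ref{consresult} goes through and yields that $\eefa^{\omega}_{\st} + \I + \HACint + \PFTPA$ is conservative over $\eefa^{\omega}$.

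For the second assertion I would use that $\QFAC^{1,0}$ is a schema of \emph{internal} formulas, as it mentions no occurrence of $\st$. Hence it can be absorbed into the parameter $\Delta_{\intern}$ of the scaled-down Theorem~\ref{consresult}: taking $\Delta_{\intern} := \QFAC^{1,0}$, every internal theorem of $\eefa^{\omega}_{\st} + \I + \HACint + \PFTPA + \QFAC^{1,0}$ is provable in $\eefa^{\omega} + \QFAC^{1,0}$. In particular a first-order $\Pi_{2}^{0}$-sentence $\forall x^{0} \, \exists y^{0} \, \varphi_{0}(x,y)$ is internal and equals its own $\Sh$-interpretation, so its provability in the nonstandard system descends to $\eefa^{\omega} + \QFAC^{1,0}$.

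The remaining step is to descend from $\eefa^{\omega} + \QFAC^{1,0}$ to first-order $\efa$ on $\Pi_{2}^{0}$-sentences, and here I would invoke the standard functional-interpretation machinery. The point is that a quantifier-free choice schema such as $\QFAC^{1,0}$ carries a trivial Dialectica interpretation, the choice functional being furnished by the interpretation itself, so that the witnessing terms extracted for a provable $\Pi_{2}^{0}$-theorem remain within the elementary fragment of G\"odel's $\T$ that corresponds to $\efa$. Consequently every $\Pi_{2}^{0}$-theorem of $\eefa^{\omega} + \QFAC^{1,0}$ is already a theorem of $\efa$, which together with the previous paragraph yields the claimed $\Pi_{2}^{0}$-conservation. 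Note that this is exactly the point where full conservativity must be weakened to $\Pi_{2}^{0}$-conservativity, since adding choice over $\efa$ need not be conservative for sentences of higher complexity.

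I expect the main obstacle to lie in this final reduction rather than in the nonstandard bookkeeping. One must ensure that the negative-translation-plus-Dialectica interpretation really applies to the \emph{fully extensional} system $\eefa^{\omega}$, which typically forces one either to pass through a weakly extensional variant or to run an elimination-of-extensionality argument, and one must check that the witnessing terms are genuinely $\efa$-provably total. By contrast, the nonstandard part is a routine transcription of the $\epa$-proofs, the only real verification being that the scaled-down soundness of the $\Sh$-interpretation holds for $\eefa^{\omega}_{\st}$.
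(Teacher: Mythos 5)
Your proposal is correct and follows essentially the same route the paper intends: the paper obtains this corollary simply by observing that the proofs of Lemma \ref{lemmaonexternalunivqf}, Theorem \ref{consresult} and Corollary \ref{conservativity} use nothing beyond $\efa$-strength resources (sequence coding, term definability, the Drinker's Principle, and the scaled-down soundness of the $\Sh$-interpretation), so they transcribe verbatim to $\eefa^{\omega}_{\st}$ and $\eefa^{\omega}$. Your handling of the second claim---absorbing $\QFAC^{1,0}$ into $\Delta_{\intern}$, noting that a $\Pi_{2}^{0}$-sentence is its own $\Sh$-interpretation, and then invoking the known negative-translation-plus-Dialectica descent from $\eefa^{\omega}+\QFAC^{1,0}$ to $\efa$, including the extensionality caveat---is precisely the standard argument the paper leaves implicit through its references to Avigad and Kohlenbach.
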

The second part (involving $\QFAC^{1,0}$), follows from the results in \cite{kohlenbach2}*{p.\ 293-294}.

\medskip

We shall mostly work over the base theory $\B_{0}\equiv \textup{$\eefa^{\omega}_{\st}$} + \I + \HACint + \PFTPA+\QFAC^{1,0}$ and sometimes use $\B_{0}^{-}$ which is $\B_{0}$ minus $\PFTPA$.   
Due to its central role in Reverse Mathematics, we also list this conservation result for $\pra$.  The system $\epra^{\omega}_{\st}$ is \textup{$\epra^{\omega}$} from \cite{kohlenbach2} plus $\EQ +\Tst$.  

\begin{cor}\label{conservativity3}
The system $\textup{$\epra^{\omega}_{\st}$} + \I + \HACint + \PFTPA$ is conservative over $\textup{$\epra^{\omega}$}$.  If we add $\QFAC^{1,0}$, we obtain a $\Pi_{2}^{0}$-conservative extension of $\pra$.
\end{cor}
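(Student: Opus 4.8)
The plan is to observe that Corollary~\ref{conservativity3} is the exact analogue of Corollary~\ref{conservativity2} with the base theory $\eefa^{\omega}$ replaced by $\epra^{\omega}$, and that the entire conservation apparatus developed above---the $\Sh$-interpretation, its extended clauses, Lemma~\ref{lemmaonexternalunivqf}, and the induction on derivations in Theorem~\ref{consresult}---is uniform in the choice of base theory and its associated term calculus. Thus I would simply re-run the proof of Theorem~\ref{consresult}, reading ``$\epra^{\omega}$'' for every occurrence of ``$\epa$'', and then peel off the auxiliary axioms exactly as in Corollary~\ref{conservativity}.

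Concretely, three things must be checked. First, an $\epra^{\omega}$-version of \cite{benno}*{Theorem~7.7}: soundness of the $\Sh$-interpretation of $\epra^{\omega}_{\st} + \I + \HACint + \Delta_{\intern}$ yielding \emph{closed terms in the primitive recursive term calculus of $\epra^{\omega}$} rather than in full G\"odel's $\T$. Second, that the explicit term manipulations occurring in the three cases of Theorem~\ref{consresult}---the witnessing terms for the axioms, the composition $s[u] := V[t,u]$ for Modus Ponens, and the terms $R, S, T$ built in Lemma~\ref{lemmaonexternalunivqf}---all stay within that calculus. Third, that the final $\PFTPA$-step is untouched, since the Drinker's principle is a pure first-order tautology independent of the base.

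The second and third points are immediate: every term appearing in the proofs above is assembled from the sequence operations $\cup$, $\{\cdot\}$ and the application $Y[x]$, together with composition of such terms, and each of these is given by a primitive recursive term and hence belongs to $\epra^{\omega}$; the Drinker's principle is logic. The first point---and the genuine obstacle---is to verify that the term extraction underlying \cite{benno}*{Theorem~7.7} never secretly invokes the full recursor of G\"odel's $\T$ at higher types, so that the witnesses $\tup t$ remain primitive recursive. Here the decisive simplification is that, by definition, $\epra^{\omega}_{\st}$ adds only $\EQ$ and $\Tst$ to $\epra^{\omega}$ and \emph{not} the external induction axiom $\IA^{\st}$; the only induction present is the internal induction inherited from $\epra^{\omega}$, which is $\Sh$-interpreted by itself and hence requires no term extraction at all. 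Consequently the witnessing terms for the interpretations of $\I$ and $\HACint$ are built by exactly the sequence-theoretic constructions of \cite{benno}, using type-$0$ primitive recursion only, so the extracted terms are indeed available in $\epra^{\omega}$. This establishes the first claim of the corollary.

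For the second claim, note that $\QFAC^{1,0}$ is an internal scheme and may therefore be absorbed into $\Delta_{\intern}$. Applying the scaled-down Theorem~\ref{consresult} (equivalently, the analogue of Corollary~\ref{conservativity}) with $\Delta_{\intern} = \QFAC^{1,0}$ shows that any internal sentence---in particular any $\Pi_{2}^{0}$-sentence $\forall x\, \exists y\, \varphi(x,y)$---provable in $\epra^{\omega}_{\st} + \I + \HACint + \PFTPA + \QFAC^{1,0}$ is already provable in $\epra^{\omega} + \QFAC^{1,0}$. It then remains to invoke the standard fact (see \cite{kohlenbach2}) that $\epra^{\omega} + \QFAC^{1,0}$ is $\Pi_{2}^{0}$-conservative over $\pra$, whence the sentence is provable in $\pra$. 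Composing the two conservation results yields the claimed $\Pi_{2}^{0}$-conservation of the full nonstandard system over $\pra$.
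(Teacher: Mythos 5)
Your proposal is correct and follows exactly the route the paper intends: the paper states this corollary without proof, justifying it only by the remark that ``careful inspection of the above proofs'' shows the conservation argument of Theorem~\ref{consresult} and Corollary~\ref{conservativity} never needs more than the base theory's own term calculus, which is precisely what you verify (including the key observation that $\epra^{\omega}_{\st}$ omits $\IA^{\st}$, so no recursor beyond the sequence-coding machinery is ever extracted). Your handling of the second claim---absorbing $\QFAC^{1,0}$ into $\Delta_{\intern}$ and composing with Kohlenbach's $\Pi_{2}^{0}$-conservation of $\epra^{\omega}+\QFAC^{1,0}$ over $\pra$ from \cite{kohlenbach2}---is likewise the intended argument.
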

The second part (involving $\QFAC^{1,0}$), follows from the results in \cite{kohlenbach2}*{p.\ 293-294}.

\medskip

The previous two corollaries are a considerable strengthening of Avigad's earlier results \cite{avi3}.
In the next sections, we show that $\PFTPA$ is also very useful `in practice', namely in Reverse Mathematics.  However, this `usefulness' comes at a price, as the system $\B_{0}^{-}$ satisfies the following theorem, 
where a \emph{term of G\"odel's system $T$ is obtained}, to be compared to the \emph{existence} of a functional in \eqref{keaker}.
\begin{thm}[Term extraction]\label{consresultcor}
If $\Delta_{\textsf{\textup{int}}}$ is a collection of internal formulas and $\psi$ is internal, and
\be\label{bog}
\B_{0}^{-} + \Delta_{\textsf{\textup{int}}} \vdash (\forall^{\st}\underline{x})(\exists^{\st}\underline{y})\psi(\underline{x},\underline{y}, \underline{a}), 
\ee
then one can extract from the proof closed terms $t$ from G\"odel's $T$ such that
\be\label{effewachten}
\textup{\eefa$^{\omega}$}+\QFAC^{1,0}+ \Delta_{\textsf{\textup{int}}} \vdash (\forall \underline{x})(\exists \underline{y}\in t(\underline{x}))\psi(\underline{x},\underline{y},\underline{a}).
\ee
\end{thm}
\begin{proof}
The proof of \cite{benno}*{Theorem 7.7} goes through for any fragment of Peano arithmetic which includes \textup{\eefa$^{\omega}$}.  
In particular, the exponential function is (all what is) required to `easily' manipulate finite sequences.    
\end{proof}
Finally, we consider the following remark on the practice of $\B_{0}$.
\begin{nota}[Using $\HACint$]\label{efkeseenpaarbladzijdentoevoegen}\rm
As noted in Notation \ref{klaf} and Section \ref{conj}, finite sequences play an important role in our systems.   
In particular, $\HACint$ produces a functional $Y$ which outputs a \emph{finite sequence} of witnesses $Y[x]$.  
However, $\HACint$ provides an actual \emph{witnessing functional} assuming (i) $\tau=0$ in $\HACint$ and (ii) the formula $\varphi$ from $\HACint$ is `sufficiently monotone' as in: 
$(\forall^{\st} x^{\sigma},n^{0},m^{0})\big([n\leq_{0}m \wedge\varphi(x,n)] \di \varphi(x,m)\big)$.    
Indeed, in this case one simply defines $G^{\sigma+1}$ by $G(x^{\sigma}):=\max_{i<|Y(x)|}Y(x)(i)$.  Clearly, we have $(\forall^{\st}x^{\sigma})\varphi(x, G(x))$.  
Similarly, in case $\varphi$ in $\HACint$ is (equivalent to) a quantifier-free internal formula, a finite search (definable in all systems considered in this paper) allows one to select the least $j<|Y(x)|$ such that $\varphi(x, Y(x)(j))$, i.e.\ we also obtain an actual witnessing functional in this case.  
To save space in proofs, we will sometimes skip the (obvious) step involving the maximum of finite sequences (or involving the finite search) when applying $\HACint$.  
We assume the same convention for terms obtained from Theorem~\ref{consresultcor}, and applications of the contraposition of idealisation \textsf{I}.

\end{nota}

\section{Reverse Mathematics and Nonstandard Analysis}\label{mikeh}
We prove a number of results in the RM of Nonstandard Analysis.
In particular, we focus on equivalences for which $\PFTPA$ appears essential.  
Whenever similar or analogous results are readily obtainable, we only indicate this possibility without going into details.  
Our results often make crucial use of the axiom of choice for quantifier-free formulas $\QFAC$, and the role of the latter is
discussed in Remark~\ref{omdageaandringt}.  We assume familiarity with \emph{higher-order} RM and its base theory $\RCAo$ as in \cite{kohlenbach2}.
\subsection{Equivalences involving comprehension}\label{tonny}
We study the natural counterparts in Nonstandard Analysis of some basic equivalences involving comprehension in higher-order RM.  
We also show that the equivalences in Nonstandard Analysis only hold in the presence of $\PFTPA$.  
We shall observe that to guarantee natural equivalences like in Theorem \ref{hanker} and to avoid constantly keeping track of extensionality, $\PFTPA$ seems unavoidable in the RM of Nonstandard Analysis.  
\subsubsection{Arithmetical comprehension}
First of all, we consider the equivalence\footnote{Note that the system $\RCAo$ is only (explicitly) introduced in \cite{kohlenbach2}, and not in \cite{kohlenbach7}.} $(\mu^{2})\asa (\exists^{2})$ over $\RCAo$ from \cite{kohlenbach7}, where the latter functionals are defined as follows:
\be\label{wiske}\tag{$\exists^{2}$}
(\exists \varphi^{2}\big[(\forall f^{1} )(\varphi (f)=0 \asa (\exists x^{0})f(x)=0)\big],
\ee
\be\tag{$\mu^{2}$}
(\exists \mu^{2})\big[(\forall f^{1})\big((\exists x^{0})(f(x)=0)\di f(\mu(f))=0\big)\big].
\ee
The functional $\mu$ is sometimes referred to as `Feferman's non-constructive mu operator'.
Let $\textsf{TJ}(\varphi)$ and $\MU(\mu)$ be the formulas in square brackets in $(\exists^{2})$ and $(\mu^{2})$.   

\medskip

In light of $(\mu^{2})\asa (\exists^{2})$ and $(\mu^{2})^{\st}\asa (\exists^{2})^{\st}$, we also expect to have e.g.\ $(\exists^{\st} \varphi^{2})\TJ(\varphi)\asa (\exists^{\st}\mu^{2})\MU(\mu)$, 
but the latter equivalence requires $\PFTPA$.  
\begin{thm}\label{hanker}
The system $\B_{0}$ proves $(\exists^{\st} \varphi^{2})\TJ(\varphi)\asa (\exists^{\st}\mu^{2})\MU(\mu)$, while the system $\B_{0}^{-}$ does not. 
The system $\B_{0}^{-}$ does prove $(\mu^{2})^{\st}\asa (\exists^{2})^{\st}$.
\end{thm}
\begin{proof}
For the first equivalence, $(\exists^{\st} \varphi^{2})\TJ(\varphi)\leftarrow (\exists^{\st}\mu^{2})\MU(\mu)$ is immediate as the implication in $(\mu^{2})$ is clearly an equivalence.  
For the first forward implication $(\exists^{\st} \varphi^{2})\TJ(\varphi)\di (\exists^{\st}\mu^{2})\MU(\mu)$, note that the antecedent implies the sentence:
\be\label{korfgu}
(\exists \varphi^{2})\big[ \TJ(\varphi)\wedge (\forall g^{1})(\varphi(g)=0\di (\exists m^{0})(g(m)=0))  \big].
\ee
Applying $\QFAC^{1,0}$ to the second conjunct in \eqref{korfgu}, there is $\Phi^{2}$ such that
\be\label{korfgu2}
(\exists \varphi^{2}, \Phi^{2})\big[ \TJ(\varphi)\wedge (\forall g^{1})(\varphi(g)=0\di (g(\Phi(g))=0))  \big],
\ee
and applying $\PFTPA$ to the sentence \eqref{korfgu2}, there are standard such $\varphi ,\Phi$.  
We thus obtain $(\exists^{\st}\mu^{2})\MU(\mu)$ and the first equivalence is established.  

\medskip

For the second equivalence in the theorem, the forward implication is immediate in the same way as for the first equivalence.  
For the second reverse implication, let standard $\varphi^{2}$ be as in $[\TJ(\varphi)]^{\st}$ and note that the latter implies $(\forall^{\st}f^{1})(\exists^{\st} x^{0})(\varphi(f)=0\di f(x)=0)$, and applying $\HACint$, there is standard $\Phi$ such that 
\[
(\forall^{\st}f^{1})(\exists x^{0}\in \Phi[f])(\varphi(f)=0\di f(x)=0), 
\]
and define $\mu(f)$ as $\Phi(f)(i)$ for $i<|\Phi(f)|$ where $i$ is the least number such that $f(\Phi(f)(i))$.  Then $\mu^{2}$ is standard and satisfies $[\MU(\mu)]^{\st}$.  

\medskip

Finally, for the non-implication, 
suppose $\B_{0}^{-}$ proves  $(\exists^{\st} \varphi^{2})\TJ(\varphi)\di (\exists^{\st}\mu^{2})\MU(\mu)$.  The latter implication yields:
\be\label{forgi}
[(\exists^{\st} \varphi^{2})\TJ(\varphi)]\di (\forall^{\st} f^{1})\big((\exists x^{0})(f(x)=0)\di (\exists^{\st}y^{0})(f(y)=0)\big)
\ee
since standard functionals have standard output for standard input.  Bringing all standard quantifiers outside, we obtain
\be\label{averagejoe}
(\forall^{\st}\varphi^{2}, f^{1})(\exists^{\st} y^{0})[\TJ(\varphi)\di \big((\exists x^{0})(f(x)=0)\di f(y)=0\big)].
\ee
Applying Theorem \ref{consresultcor} to `$\B_{0}^{-}\vdash \eqref{averagejoe}$', we obtain a term $t$ from G\"odel's $T$ such that 
\[
(\forall \varphi^{2}, f^{1})(\exists  y^{0}\in t(\varphi, f))[\TJ(\varphi)\di \big((\exists x^{0})(f(x)=0)\di f(y)=0\big)]
\]
is provable in \textup{\eefa$^{\omega}$}.  Define $s(\varphi)(f)$ as $t(\varphi, f)(i)$ where $i<|t(\varphi, f)|$ is the least number such that $f(t(\varphi, f)(i))=0$ if such there is, and zero otherwise.
Clearly, $s$ is a term of G\"odel's $T$ which converts $\varphi^{2}$ as in $(\exists^{2})$ into $\mu^{2}$ as in $(\mu^{2})$.  However, as discussed in \cite{kohlenbach7}*{Remark~3.6}, such a term 
 cannot exist for the following reason: $\varphi^2$ from $(\exists^{2})$ can be chosen to satisfy $ \varphi \leq_{2} 1$, i.e.\ such $\varphi^2$ has a majorant in the full set-theoretic model, as do terms of G\"odel's $T$, while $\mu^2$ as in $(\mu^{2})$ does not, a contradiction.  
This contradiction yields the non-implication.
\end{proof}
In light of the theorem, it is reasonable to include $\PFTPA$ in the base theory $\B_{0}$.  

\subsubsection{Comprehension for $\Pi_{1}^{1}$-formulas and beyond}
Secondly, we show that Theorem \ref{hanker} easily generalises to other functionals, like those embodying $\Pi_{1}^{1}$-comprehension.
\be\label{suske}
(\exists S^{2})\big[(\forall f^{1})(   S(f)=_{0} 0 \asa (\exists g^{1})(\forall x^{0})(f(\overline{g}x)=0)\big] \tag{$S^{2}$}
\ee
\be\label{NU}\tag{$\mu_{1}$}
(\exists \mu_{1}^{1\di 1})\big[(\forall f^{1})\big((\exists g^{1})(\forall x^{0})(f(\overline{g}x)=0)\di (\forall x^{0})(f(\overline{\mu_{1}(f)}x)=0)\big) \big]
\ee
Recall the notation involving the `overline' from Notation \ref{klaf}.  
The functional $(\mu_{1})$ may be found in \cite{avi2}*{\S8} while the \emph{Suslin functional} $(S^{2})$ may be found in \cite{yama1,kohlenbach2}.  Clearly, $\RCAo+\QFAC^{1,1}$ proves $(\mu_{1})\asa (S^{2})$ in the same way as in \cite{kohlenbach7}*{Cor.\ 3.5}.  Let $\SU(S)$ and $\MU_{1}(\mu_{1})$ be the formulas in square brackets in $(S^{2})$ and $(\mu_{1})$.  
\begin{cor}\label{horkings}
$\B_{0}+\QFAC^{1,1}$ proves $(\exists^{\st} S^{2})\SU(S)\asa (\exists^{\st}\mu_{1})\MU_{1}(\mu_{1})$, while $\B_{0}^{-}+\QFAC^{1,1}$ does not.  $\B_{0}^{-}$ does prove $(S^{2})^{\st}\asa (\mu_{1})^{\st}$.  
\end{cor}
\begin{proof}
The first reverse implication is again immediate while for the first forward implication, similar to \eqref{korfgu}, we have
\be\label{korfguf}
(\exists S^{2})\big[ \SU(S)\wedge (\forall f^{1})(S(f)=0\di  (\exists g^{1})\underline{(\forall x^{0}) (f(\overline{g}x)\ne 0))}  \big], 
\ee
in which the underlined formula may be treated as quantifier-free due to $(\exists^{2})$, which follows from $(S^{2})$.   Applying $\QFAC^{1,1}$ to the second conjunct of \eqref{korfguf}, one obtains a version of \eqref{korfgu2}.  Applying $\PFTPA$ to the resulting formula yields $(\exists^{\st}\mu_{1})\MU_{1}(\mu_{1})$.  

\medskip

The second reverse implication follows by noting that the implication in $(\mu_{1})^{\st}$ is actually an equivalence.  
Now, to check whether the formula $(\forall^{\st} x^{0})(f(\overline{\mu_{1}(f)}x)=0)$ in the latter holds, use $ (\mu^{2})^{\st}$, which follows from $(\mu_{1})^{\st}$ by rewriting $(\exists^{\st} n_{0})(f(n_{0}))=0$ as $(\exists^{\st} g_{0}^{1})(\forall^{\st} x)(\tilde{f}(\overline{g_{0}}x)=0)$, 
where $g_{0}^{1}$ is constant $n_{0}$ and $\tilde{f}(\sigma^{0^{*}})=f(\sigma(0))$ is standard if $f$ is.  For the second forward implication, $(S^{2})^{\st}$ implies $\eqref{korfguf}^{\st}$ as in the first part of the proof.  
Since the former axiom also implies $(\exists^{2})^{\st}$, the underlined formula in $\eqref{korfguf}^{\st}$ may be treated as quantifier-free.  Applying $\HACint$ to the second conjunct of $\eqref{korfguf}^{\st}$, we obtain standard $\Phi$ such that
\be\label{nogmeer}
(\forall^{\st} f^{1})(S(f)=0\di  (\exists g^{1}\in \Phi(f))\underline{(\forall^{\st} x^{0}) (f(\overline{g}x)\ne 0))}
\ee
The underlined formula in \eqref{nogmeer} may be treated as quantifier-free due to $(\exists^{2})^{\st}$.  
Hence, following Notation \ref{efkeseenpaarbladzijdentoevoegen}, we can obtain $\mu_{1}$ as in $(\mu_{1})$ from $\Phi$ as in \eqref{nogmeer}.  

\medskip

Finally, for the non-implication, one proceeds analogously to the proof of the theorem:  suppose the system $\B_{0}^{-}+(\exists^{2})+\QFAC^{1,1}$ does prove $(\exists^{\st} S^{2})\SU(S)\di (\exists^{\st}\mu_{1})\MU_{1}(\mu_{1})$ and apply Theorem \ref{consresultcor} to obtain a term $t$ of G\"odel's $T$ 
such that $(\forall S^{1})(\exists \mu_{1}\in t(S))[\SU(S)\di \MU_{3}(\mu_{1})]$.
Note that Notation \ref{efkeseenpaarbladzijdentoevoegen} (the case of the finite search) applies, i.e.\ we can explicitly define $\mu_{1}$ from $(\mu_{1})$ in terms of $t(S)$ where $S$ is the Suslin functional.  
However, following the same reasoning as in the proof of Theorem \ref{hanker} and \cite{kohlenbach7}*{Remark~3.6}, the functional $\mu_{1}$ is not majorisable, while $S^{2}$ can be chosen to satisfy $S\leq_{2}1$, and terms of G\"odel's $T$ are majorisable, a contradiction, and the non-implication follows. 
\end{proof}
The previous corollary is also readily obtained for the functional $(\E_{2})$, which gives rise to full second-order arithmetic. 
\be\label{suske2}
(\exists T^{3})\big[(\forall \varphi^{2})\big(  T(\varphi)=_{0} 0 \asa (\exists g^{1})(\varphi(g)=0)\big)\big]. \tag{$\E_{2}$}
\ee
\be\label{suske3}
(\exists U^{3})\big[(\forall \varphi^{2})\big( (\exists g^{1})(\varphi(g)=0)\di \varphi(U(\varphi))=0\big)\big]. \tag{$U^{3}$}
\ee
The functional $(\E_{2})$ is studied in \cite{hunter} while one readily proves $(U^{3})\asa (\E_{2})$ over $\RCAo+\QFAC^{2,1}$ as for the Suslin functional.  
The functional $(\E_{2})$ is called $(\exists^{3})$ in \cite{dagsam}.
Let $\textsf{SO}(T)$ and $\MU_{3}(U)$ be the formulas in square brackets in $(\E_{2})$ and $(U^{3})$.  
We have the following corollary.    
\begin{cor}\label{crossref}
The system $\B_{0}+\QFAC^{2,1}$ proves $(\exists^{\st} T^{3})\textsf{\textup{SO}}(T)\asa (\exists^{\st}U^{3})\MU_{3}(U)$, while $\B_{0}^{-}+\QFAC^{2,1}$ does not.  The system $\B_{0}^{-}$ does prove $(\E_{2})^{\st}\asa (U^{3})^{\st}$.  
\end{cor}
\begin{proof}
The first reverse implication is immediate.  The first forward implication follows by considering the following variation of \eqref{korfgu} and \eqref{korfguf}:
\be\label{gehebtalhonderd}
(\exists T^{3})\big[\SO(T)\wedge (\forall \varphi^{2})\big(  T(\varphi)=_{0} 0 \di (\exists g^{1})(\varphi(g)=0)\big)\big]. 
\ee
Similar to the proof of the theorem, apply $\QFAC^{2,1}$ to the second conjunct, followed by $\PFTPA$, to obtain $(\exists^{\st}U^{3})\MU_{3}(U)$.

\medskip

The second reverse implication is immediate.  The second forward implication follows by considering $\eqref{gehebtalhonderd}^{\st}$ and using $\HACint$ instead of $\QFAC^{2,1}$. 
Note that Notation \ref{efkeseenpaarbladzijdentoevoegen} (the case of the finite search) applies again.  

\medskip

Finally, the non-implication follows in the same way as for the theorem and the previous corollary: if $B_{0}^{-}+\QFAC^{2,1}$ proves $(\exists^{\st} T^{3})\textsf{\textup{SO}}(T)\di (\exists^{\st}U^{3})\MU_{3}(U)$, 
then Theorem \ref{consresultcor} provides a term $t$ of G\"odel's $T$ such that $(\forall T^{3})(\exists U^{3}\in t(T))(\SO(T)\di \MU_{3}(U))$.
Note that Notation \ref{efkeseenpaarbladzijdentoevoegen} (the case of the finite search) applies, i.e.\ we can explicitly define $U_{0}$ such that $\MU_{3}(U_{0})$ in terms of $t(T)$.  
Thus, there is a term of G\"odel's $T$ expressing the (non-majorisable) functional $U^{3}$ in terms of the (trivially majorisable) functional $T^{3}$.  
As in the previous, this leads to a contradiction. 
\end{proof}

\subsubsection{The role of extensionality}
Thirdly, we show that $\PFTPA$ simplifies the Reverse Mathematics of Nonstandard Analysis by obviating the need to keep track of the axiom of extensionality \eqref{EXT}$^{\st}$.  

\medskip

Now, the attentive reader has noted that the consequent of \eqref{forgi} is a fragment of \emph{Transfer}, namely the following restriction to $\Pi_{1}^{0}$-formulas:
\be\tag{$\paai$}
(\forall^{\st} f^{1})\big((\forall^{\st} x^{0})(f(x)=0)\di (\forall y^{0})(f(y)=0)\big)
\ee
Similar to how one `bootstraps' $\Pi_{1}^{0}$-comprehension to $\ACA_{0}$, the system $\B_{0}^{-}+\paai$ proves $\varphi\asa \varphi^{\st}$ for any internal arithmetical formula (only involving standard parameters).  
By contraposition (using $\HACint$ and the basic axioms), we immediately obtain the following regarding $\paai$ and $(\mu^{2})$:
\be\label{hankio}
(\exists^{\st}\mu^{2})\MU(\mu)\di\paai \di (\exists^{\st}\mu^{2})(\forall^{\st}f^{1})\MU(\mu, f), 
\ee
where $\MU(\mu, f)$ is just $\MU(\mu)$ with the leading quantifier dropped (and the same for $\TJ(\varphi, f)$).  In light of the above and the fact that the final implication in \eqref{hankio} reverses, it is a natural question whether $\paai$ is equivalent to a version of $(\exists^{2})$.  
To answer this question, let $(\textsf{E})_{2}$ be the axiom of extensionality restricted to type two functionals.  We have the following theorem.      
\begin{thm}\label{hollfy}
$\B_{0}^{-}$ proves $\paai\asa[ \textsf{\textup{(E)}}^{\st}_{2}+ (\exists^{\st} \varphi^{2})(\forall^{\st}f^{1})\TJ(f, \varphi)]$; $\B_{0}+\QFAC^{2,0}$ proves $\paai\asa (\exists^{\st} \varphi^{2})(\forall^{\st}f^{1})\TJ(f, \varphi)$; $\B_{0}^{-}+\QFAC^{2,0}$ does not. 
\end{thm}
\begin{proof}
To establish the non-implication, $ (\exists^{\st} \varphi^{2})(\forall^{\st}f^{1})\TJ(f, \varphi)\di \paai$ implies \eqref{averagejoe}, and a contradiction is obtained in the same way for Theorem \ref{hanker}.  

\medskip

For the first forward implication, the axiom of extensionality as in $\eqref{EXT}_{2}$ implies 
\be\label{tex}
(\forall Y^{2}, f^{1}, g^{1})(\exists N^{0})[{f}N=_{0}{g}N\di Y(f)=_{0}Y(g)], 
\ee
and applying (the contraposition of) $\paai$ to \eqref{tex} for standard $Y, f, g$, we obtain $\eqref{EXT}_{2}^{\st}$.  By \eqref{hankio}, $\paai$ also implies $ (\exists^{\st} \varphi^{2})(\forall^{\st}f^{1})\TJ(f, \varphi)$.  

\medskip

For the first reverse implication, assume $ (\exists^{\st} \varphi^{2})(\forall^{\st}f^{1})\TJ(f, \varphi)$ and $\eqref{EXT}_{2}^{\st}$.  
Now suppose $\neg\paai$, i.e.\ there is standard $h_{0}$ such that $(\forall^{\st}n^{0})h(n)=1$ and $ (\exists m^{0})h(m)= 0$.  Clearly, $h\approx_{1}11\dots$, hence $\varphi(h)=\varphi(11\dots)$ by standard extensionality for standard $\varphi$ as in $(\forall^{\st}f^{1})\TJ(f, \varphi)$.  
However $\varphi(h)=0\ne1=\varphi(11\dots) $, by the definition of $\varphi$, and this contradiction establishes the first equivalence.  

\medskip

For the second equivalence, it suffices to prove $\eqref{EXT}_{2}^{\st}$ in $\B_{0}+\QFAC^{2,0}$, in light of the (proof of the) first equivalence.  
To this end, apply $\QFAC^{2, 0}$ to \eqref{tex} to obtain 
\be\label{tex3}
(\exists \Phi^{3})(\forall Y^{2}, f^{1}, g^{1})[{f}\Phi(Y, f,g)=_{0}{g}\Phi(Y, f,g)\di Y(f)=_{0}Y(g)], 
\ee
and apply $\PFTPA$ to obtain standard such $\Phi$, which implies $\eqref{EXT}_{2}^{\st}$.  
\end{proof}
Hence, if one does not wish to constantly keep track of standard extensionality, $\PFTPA$ seems unavoidable in the RM of Nonstandard Analysis.  
Note that Kohlenbach's higher-order RM from \cite{kohlenbach2} adopts the axiom of extensionality.  

\medskip

As it turns out, a similar result holds for $(\E_{2})$ and a fragment of \emph{Transfer}.  
\be\tag{$\SOT$}
(\forall^{\st}\varphi^{2})\big[(\exists f^{1})\varphi(f)=0\di (\exists^{\st}f^{1})\varphi(f)=0    ],
\ee
Let $\SO(T, \varphi)$ be $(\E_{2})$ with the two outermost quantifiers removed and let $\eqref{EXT}_{3}$ be the axiom of extensionality restricted to type three functionals.  
\begin{thm}\label{polipo}
$\B_{0}^{-}$ proves $\SOT\asa [\textsf{\textup{(E)}}^{\st}_{3}+ (\exists^{\st} T^{3})(\forall^{\st}\varphi^{2})\SO(T, \varphi)]$; $\B_{0}+\QFAC^{3,1}$ proves $\SOT\asa (\exists^{\st} T^{3})(\forall^{\st}\varphi^{2})\SO(T, \varphi)$ while $\B_{0}^{-}+\QFAC^{3,1}$ does not. 
\end{thm}
\begin{proof}
For the first forward implication, note that $\eqref{EXT}_{3}$ implies
\be\label{tex2}
(\forall \xi^{3}, Y^{2}, Z^{^{2}})(\exists f^{1})[Z(f)=_{0}Y(f) \di \xi(Z)=_{0}\xi(Y)], 
\ee
and applying (the contraposition of) $\SOT$ to \eqref{tex2} for standard $\xi, Z, Y$, we obtain $\eqref{EXT}_{3}^{\st}$. 
Applying $\HACint$ to $\SOT$ readily yields $(\exists^{\st} T^{3})(\forall^{\st}\varphi^{2})\SO(T, \varphi)$, using the second case in Notation \ref{efkeseenpaarbladzijdentoevoegen}.
For the first reverse implciation, suppose $\SOT$ is false, i.e.\ there is standard $\varphi_{0}^{2}$ such that $(\forall^{\st}f^{1})\varphi_{0}(f)\ne0$ but $(\exists g^{1})\varphi_{0}(g)= 0$.  
Define $\varphi_{1}^{2}$ as follows: $\varphi_1(f):=\varphi_{0}(f)$ if the latter is nonzero, and $1$ otherwise.  Clearly $\varphi_{0}\approx_{2}\varphi_{1}$ but $T(\varphi_{0})=0\ne T(\varphi_{1})$, i.e.\ a contradiction with the right-hand side of the first equivalence ensues.  

\medskip

For the second equivalence, it suffices to prove $\eqref{EXT}_{3}^{\st}$ in $\B_{0}+\QFAC^{3,1}$, in light of the (proof of the) first equivalence.  
To this end, apply $\QFAC^{3, 1}$ to \eqref{tex2} to obtain 
\be\label{tex4}
(\exists \Psi)(\forall \xi^{3}, Y^{2}, Z^{^{2}})[Z(\Psi(\xi, Y, Z))=_{0}Y(\Psi(\xi, Y, Z)) \di \xi(Z)=_{0}\xi(Y)], 
\ee
and apply $\PFTPA$ to obtain standard such $\Psi$, which implies $\eqref{EXT}_{3}^{\st}$. 

\medskip

Finally, to establish the non-implication, a proof of $(\exists^{\st} T^{3})(\forall^{\st}\varphi^{2})\SO(T, \varphi)\di \SOT$ in $\B_{0}^{-}+\QFAC^{2,1}$ yields,
in the same way as in the proof of Corollary \ref{crossref}, a term of G\"odel's $T$ which computes the functional from $(U^{3})$ in terms of the functional from $(\E_{2})$. 
As established in Corollary \ref{crossref}, this is impossible.  
\end{proof}
In light of the previous two theorems, it again seems reasonable to include $\PFTPA$ in the base theory.  
Similar results should exist for the Suslin functional.  

\medskip

We could study the equivalence $(\mu^{2})\asa \UWKL$ from \cite{kohlenbach7}*{Cor.\ 3.5} where the latter states the existence of a functional $\Phi^{1\di 1}$ which outputs a path $\Phi(T)\in T$ on input any infinite binary tree $T$.  
This equivalence depends on the presence of the axiom of extensionality in that in systems without the latter, $\UWKL$ is not stronger than $\WKL$ itself (See \cite{kohlenbach7}*{Theorem 3.2}).  Hence, similar to the previous theorems, $\PFTPA$ is needed to prove equivalences between 
nonstandard versions of $\UWKL$ and $(\mu^{2})$, which provides another argument in favour of $\B_{0}$.   
In conclusion, to guarantee natural equivalences like in Theorem \ref{hanker} and to avoid constantly keeping track of extensionality, $\PFTPA$ seems unavoidable in the RM of Nonstandard Analysis.  
 
\medskip  

We finish this section with a remark on the role of $\QFAC$ regarding our results.  
\begin{rem}[On the role of $\QFAC$] \label{omdageaandringt}\rm
As noted at the beginning of this section, many results obtained above seem to crucially depend on fragments of $\QFAC$.
Hence, we should discuss the role of $\QFAC$ and its effect on RM.  

\medskip

First of all, we point out a mistake identified by the referee:  the correct statement of \cite{yama1}*{Theorem 2.2}, based on \cite{avi2}, is: $\RCAo+\QFAC^{0,1}+(\exists^{2})$ is a $\Pi_{2}^{1}$-conservative extension 
of $\ACA_{0}$.  Indeed, the restriction to $\QFAC^{0,1}$ is necessary for the elimination of extensionality procedure, as explained in the proof of \cite{kohlenbach7}*{Theorem 3.7}.  

\medskip

Secondly, the first result in Theorem \ref{hanker} depends on quantifier-free choice; indeed, $[\B_{0}\setminus \QFAC^{1,0}]+(\exists^{2})$ cannot prove $(\mu^{2})$ by Corollary \ref{conservativity2}, as $\textup{\eefa$^{\omega}$}+(\exists^{2})$ has a model consisting of majorisable functionals (See the proof of Theorem~\ref{hanker}), while $\mu^{2}$ is not majorisable. 
Similar results hold for Corollaries \ref{horkings} and \ref{crossref}.

\medskip

Thirdly, the proofs of Theorems \ref{hollfy} and \ref{polipo} suggest that to obtain a fragment of standard extensionality \eqref{EXT}$^{\st}$ via $\PFTPA$, we need a fragment of $\QFAC$. 
By the first part of Theorem~\ref{hollfy}, $\eqref{EXT}_{2}^{\st}$ is needed to obtain a nonstandard version of $(\mu^{2})$, namely $\paai$, from a nonstandard version of $(\exists^{2})$, namely $(\exists^{\st}\varphi^{2})(\forall^{\st}f^{1})\TJ(f, \varphi)$.  
Given the difference between $(\exists^{2})$ and $(\mu^{2})$ noted above, we believe $\eqref{EXT}_{2}^{\st}$ and $\QFAC^{2,0}$ are essential in obtaining the first and second part of Theorem~\ref{hollfy}. 

\medskip

Fourth, the use of $\QFAC^{2,1}$ in Theorem \ref{crofu} seems essential as in the second paragraph of this remark.  Indeed,  $\textup{\eefa$^{\omega}$}+\WKL$ has a model consisting of majorisable functionals, while $\Phi$ from the proof of Theorem \ref{crofu} does not seem majorisable. 
Similar reasoning applies to Theorem \ref{crofu2}, as $\Phi$ from \eqref{gow1} does not seem majorisable due to the occurrence of $g^{1}$ as part of $\Phi(X, f)$. 
 
\medskip

Finally, we do not know whether, but believe that, the use of fragments of $\QFAC$ is essential in Theorems \ref{massiveBT}, \ref{coreBT}, and \ref{holBT}.  
%
%
\end{rem}
\subsection{Equivalences between internal and external theorems}\label{kext}
As suggested by the title, we prove the equivalence between certain internal theorems, i.e.\ not involving Nonstandard Analysis, and externals ones, i.e.\ involving Nonstandard Analysis.  
In particular, we prove the following equivalence over various extensions of $\B_{0}$: $\WKL\asa \WKL^{\st}$, $\paai\asa (\exists^{2})$, $\ATR_{0}\asa \ATR_{0}^{\st}$, $\Paai\asa (S^{2})$,  
that $\STP$ is equivalent to the special fan functional from \cite{samGH}, and $\MUC\asa \NUC$.  

\medskip

Some of the aforementioned results rely on rather strong base theories, and we believe this to be unavoidable.  Note that a small number of equivalences in Reverse Mathematics are known to require a base theory \emph{stronger} than $\RCA_{0}$ and Hirschfeldt has asked whether there are more such equivalences (See \cite{montahue}*{\S6.1}).   

\medskip

Whenever possible, we show that $\PFTPA$ is essential for the equivalence at hand. Thus, we again observe that to guarantee natural equivalences like in Theorem \ref{hugger}, $\PFTPA$ seems unavoidable in the RM of Nonstandard Analysis.

\subsubsection{Weak K\"onig's lemma}
Firstly, we prove that \emph{weak K\"onig's lemma} is equivalent to itself relative to `st'.  This lemma states that a infinite binary tree has a path, and is abbreviated $\WKL$.  We take `$T^{1}\leq_{1}1$' to mean that $T^{1}$ is a binary tree and reserve this variable for this purpose.       
\begin{thm}\label{crofu}
The system $\B_{0}+\QFAC^{2,1}$ proves $\WKL\asa \WKL^{\st}$.  
\end{thm}
\begin{proof}
The contraposition of $\WKL$ (known as the \emph{fan theorem}) is:
\[
(\forall T\leq_{1}1)\big[ (\forall \alpha\leq_{1}1)(\exists n^{0})(\overline{\alpha}n\not \in T) \di  (\exists k^{0})(\forall \beta^{0})(\exists i\leq k)(|\beta|= k \di \overline{\beta}i\not\in T) \big],
\]
which immediately yields (by strengthening the antecedent)
\[
(\forall T\leq_{1}1, g^{2})\big[ (\forall \alpha\leq_{1}1)(\overline{\alpha}g(\alpha)\not \in T) \di  (\exists k^{0})(\forall \beta^{0})(\exists i\leq k)(|\beta|= k \di \overline{\beta}i\not\in T) \big],
\]
and bringing all unbounded quantifiers outside, we obtain:
\[
(\forall T\leq_{1}1, g^{2})(\exists \alpha\leq_{1}1,k^{0})\big[ (\overline{\alpha}g(\alpha)\not \in T) \di (\forall \beta^{0})(\exists i\leq k)(|\beta|= k \di \overline{\beta}i\not\in T) \big],
\]
where the formula in square brackets is equivalent to a quantifier-free one.  Applying $\QFAC^{2,1}$, we obtain $\Phi$ producing $\alpha, k$ from input $T, g$.  
By $\PFTPA$, we may assume such $\Phi$ is standard, which implies
\[
(\forall^{\st} T\leq_{1}1, g^{2})(\exists^{\st} \alpha\leq_{1}1,k^{0})\big[ (\overline{\alpha}g(\alpha)\not \in T) \di (\forall \beta^{0})(\exists i\leq k)(|\beta|= k \di \overline{\beta}i\not\in T) \big],
\]
as standard objects produce standard output for standard input.  Hence, we have established the forward implication, and the reverse one follows by dropping the second `st' in the previous sentence and applying $\PFTPA$.    
\end{proof}
We do not know if $\PFTPA$ is essential in the previous.  Note that the same result can be proved for \emph{weak weak K\"onig's lemma} $\WWKL$ from \cite{simpson2}*{X.1}, and a host of statements from the Reverse Mathematics zoo (\cite{damirzoo}) with similar syntactic form, namely of the form $(\forall x^{\rho})(\exists y^{\tau})A_{0}(x, y)$ with $A_{0}$ a quantifier-free formula.   

\subsubsection{Arithmetical comprehension}
Secondly, we prove $\paai\asa (\mu^{2})$ over a conservative extension of $\B_{0}+\WKL$.  
To this end, we consider Kohlenbach's generalisations of $\WKL_{0}$ as introduced in \cite{kohlenbach4}*{\S5-6}.  
These generalisations have been studied in \cite{dagsam} and give rise to the following functional and fragment of \emph{Transfer}.  
\be\label{KOT}\tag{$\kappa^3$}
(\exists \kappa^{2\di 1})(\forall Y^{2})\big[ (\exists f^{1}\leq_{1}1)(Y(f)=0) \di  [Y(\kappa(Y))=0 \wedge \kappa(Y)\leq_{1}1]\big].
\ee
\be\tag{\textsf{\textup{WT}}}
(\forall^{\st} Y^{2})\big[ (\exists f^{1}\leq_{1}1)(Y(f)=0)\di(\exists^{\st} f^{1}\leq_{1}1)(Y(f)=0)  \big]
\ee
Note that $\RCAo+(\kappa^{3})$ is conservative over $\WKL_{0}$ since the intuitionistic fan functional $\textsf{MUC}$ readily implies $(\kappa^{3})$ (See \cite{kohlenbach2}*{\S3}).  
As such, $\B_{0}+(\kappa^{3})$ is a reasonable base theory according to Hirschfeldt's criteria from \cite{dslice}*{p.\ 13}.  On the other hand, $(\kappa^{3})$ has some `hidden power' compared to $\WKL_{0}$: Kohlenbach has shown in a private communication\footnote{The proof amounts to the observation that $\N^\N$ is recursively homeomorphic to a $\Pi^0_2$-subset of Cantor space. Since this set is computable in $\exists^{2}$, any oracle call to $\exists^{3}$ (aka $\E_{2}$) can be rewritten to an equivalent oracle call to $\kappa^{3}$, in a uniform way.  
See also \cite{dagsam}*{\S6}.\label{dagwrotethis}} that $\RCAo$ proves $[(\kappa^{3})+(\exists^{2})]\di (\E_{2})$, where the latter implies full second-order arithmetic, and hence \emph{dwarfs} the other functionals.    
\begin{thm}\label{hugger}
$\B_{0}+(\kappa^{3})$ proves $(\mu^{2})\asa \paai$; $\B_{0}^{-}+(\kappa^{3})$ does not.  
\end{thm}
\begin{proof}
The non-implication follows by noting that applying Theorem \ref{consresultcor} to `$\B_{0}^{-}+(\kappa^{3})+(\mu^{2})\vdash \paai$' yields a term $t$ in G\"odel's $T$ such that $\MU(t)$, which is impossible.  
The forward implication is immediate by applying $\PFTPA$ to $(\mu^{2})$ and noting that standard functionals have standard output for standard input.  

\medskip

For the reverse implication, note that $\paai$ implies $(\mu^{2})^{\st}$ (using $\HACint$) and that $(\kappa^{3})$ implies $\textsf{WT}$ (using $\PFTPA$).  
Now, $(\mu^{2})^{\st}$ implies 
\be\label{bezig}
(\exists^{\st}\mu^{2})(\forall^{\st}n^{0})\underline{(\forall^{\st}f^{1}\leq_{1}1)\big[f(n)=0 \di f(\mu(f))=0\big]}, 
\ee
and apply (the contraposition of ) $\textsf{WT}$ to the underlined formula in \eqref{bezig} to obtain
\be\label{forju}
(\exists^{\st}\mu^{2})(\forall^{\st}n^{0}){(\forall f^{1}\leq_{1}1)\big[f(n)=0 \di f(\mu(f))=0\big]}, 
\ee
Let $Z(f, n)$ be the \emph{standard} characteristic functional of the formula in square brackets in \eqref{forju} and note that \eqref{forju} implies the following:
\be\label{forju2}
(\exists^{\st}\mu^{2})(\forall^{\st}n^{0})(\forall f^{1}\leq_{1}1)(Z(f, n)=1), 
\ee
and using (standard thanks to $\PFTPA$) $\kappa^{3}$ as in $(\kappa^{3})$, we obtain 
\be\label{forju3}
(\exists^{\st}\mu^{2})(\forall^{\st}n^{0})(Z(\kappa(\lambda f.Z(f, n)), n)=1), 
\ee
which immediately yields the following thanks to $\paai$:
\be\label{forju4}
(\exists^{\st}\mu^{2})(\forall n^{0})(Z(\kappa(\lambda f.Z(f, n)), n)=1), 
\ee
which by the definition of $\kappa$ is equivalent to
\be\label{forju5}
(\exists^{\st}\mu^{2})(\forall n^{0})(\forall f\leq_{1}1)(Z(f, n)=1), 
\ee
and finally we obtain, by the definition of $Z$, that 
\be\label{forju6}
(\exists^{\st}\mu^{2})(\forall n^{0})(\forall f\leq_{1}1)\big[f(n)=0 \di f(\mu(f))=0\big], 
\ee
which is essentially $(\mu^{2})$ thanks to $\PFTPA$, and we are done.
\end{proof}
It is an interesting question whether we can weaken the base theory in the previous theorem to other conservative extensions of $\WKL_{0}$.  
Another interesting question is which system can prove $\textsf{WT}\asa (\kappa^{3})$?  It is straightforward to show $\B_{0}+(\mu^{2})\vdash [\textsf{WT}\asa \SOT]$ based on the aforementioned result by Kohlenbach, but then obtaining $(\kappa^{3})$ seems impossible.   
\subsubsection{Transfinite recursion}
Thirdly,  we prove that \emph{arithmetical transfinite recursion} is equivalent to itself relative to `st'.  
Regarding definitions, the system $\ATR_{0}$ is $\ACA_{0}$ plus the second-order schema of arithmetical transfinite recursion:
\be\tag{$\ATR_{\theta}$}
(\forall X^{1})\big[\WO(X)\di (\exists Y^{1})H_{\theta}(X, Y) \big], 
\ee
for any arithmetical $\theta$.  Note that $\WO(X)$ expresses that $X$ is a countable well-ordering and $H_{\theta}(X, Y)$ expresses that $Y$ is the result from iterating $\theta$ along $X$.  
More details and related results may be found in \cite{simpson2}*{V.2}.  
To avoid quantifying over (arithmetical) formulas, we shall make use of an equivalent\footnote{By considering finite well-orders, $\ATR$ implies comprehension for formulas of the form $(\exists m^{0})(f(m,n, \overline{Z}m)=0)$.  By \emph{Kleene's normal form theorem} in $\RCA_{0}$ (\cite{simpson2}*{II.2.7}), a $\Sigma_{1}^{0}$-formula can be formulated as in the latter form.  
Since comprehension for $\Sigma_{1}^{0}$-formulas implies $\ACA_{0}$ by \cite{simpson2}*{III.1.3}, $\ATR\di\ACA_{0}$, and the latter allows the replacement of arithmetical formulas by quantifier-free ones.  Hence, $\ATR_{0}\di\ATR$ over $\RCA_{0}$; the other direction is immediate.} reformulation of arithmetical transfinite recursion.  
To this end, let $H_{f}(X, Y)$ be $H_{\theta}(X, Y)$ as above for $\theta(n,Z)\equiv(\exists m^{0})(f(m,n, \overline{Z}m)=0)$.  We define $\ATR$ as follows:  
\be\tag{$\ATR$}
(\forall X^{1},f^{1})\big[\WO(X)\di (\exists Y^{1})H_{f}(X, Y) \big].  
\ee
With the previous notation, we have the following theorem.  
\begin{thm}\label{crofu2}
The system $\B_{0}+(\exists^{2})+\QFAC^{1,1}$ proves $\ATR\asa \ATR^{\st}$.  
\end{thm}
\begin{proof}
We first show that $\paai$ is available as follows: applying $\PFTPA$ to $(\exists^{2})$, there is a standard $\varphi^{2}$ such that $\TJ(\varphi)$.  
Hence, we obtain $(\exists^{\st}\mu^{2})\MU(\mu)$ by Theorem~\ref{hanker}, and the latter axiom implies $\paai$ by \eqref{hankio}.  

\medskip

For the reverse implication, let $\WO(X, g)$ be the formula expressing that $g^{1}$ is not an infinite descending sequence in $\leq_{X}$.  Then $\ATR^{\st}$ implies
\be\label{gow}
(\forall^{\st} X^{1},f^{1})(\exists^{\st} g^{1}, Y^{1})\big[\WO(X,g)\di H_{f}(X, Y) \big], 
\ee
as the formula in square brackets is arithmetical and $\paai$ is given thanks to $(\exists^{2})$, Theorem \ref{hanker} and \eqref{hankio}.  
Dropping the second `st' in the sentence \eqref{gow} and applying $\PFTPA$ yields $\ATR$.  For the forward implication, $\ATR$ readily implies
\[
(\forall X^{1},f^{1})(\exists g^{1}, Y^{1})\big[\WO(X,g)\di H_{f}(X, Y) \big], 
\]
and the combination of $\QFAC^{1,1}$ and $(\exists^{2})$ yields 
\be\label{gow1}
(\exists \Phi)(\forall X^{1},f^{1})(\exists g^{1}, Y^{1}\in \Phi(X, f))\big[\WO(X,g)\di H_{f}(X, Y) \big], 
\ee
and by $\PFTPA$ there is standard such $\Phi$.  Since standard functionals produce standard output for standard input, we obtain \eqref{gow}, and $\ATR^{\st}$ via $\paai$.
\end{proof}
Note that the previous theorem immediately generalises to \emph{any} sentence of the form $(\forall x^{\tau})(\exists y^{\sigma})A_{1}(x, y)$ with $A_{1}$ arithmetical.

\subsubsection{Comprehension beyond the arithmetical}
Fourth, we prove results analogous to Theorem \ref{hugger} for $(\mu_{1})$ and $(\E_{2})$.  
We could generalise the proof of the former theorem to the latter functionals, but a more straightforward `trick' based on Theorem~\ref{hugger} turns out to be sufficient (and much neater).  
We therefore formulate the theorems in this section as corollaries (to the latter theorem).

\medskip

First of all, we prove that the functional $(\mu_{1})$, which essentially expresses $\Pi_{1}^{1}$-comprehension, is equivalent to $\Paai$ as follows:
\be\tag{$\Paai$}
(\forall^{\st}f^{1})\big[ (\exists g^{1})(\forall x^{0})(f(\overline{g}x)=0)\di (\exists^{\st}g^{1})(\forall x^{0})(f(\overline{g}x)=0)\big], 
\ee
which is the nonstandard counterpart of $\FIVE$.  We have the following theorem.  
\begin{cor}
$\B_{0}+(\kappa^{3})$ proves $(\mu_{1})\asa \Paai$; $\B_{0}^{-}+(\kappa^{3})$ does not.  
\end{cor}
\begin{proof}
The non-implication follows as in Theorem \ref{hugger}, while the forward implication is immediate by applying $\PFTPA$ to $(\mu_{1})$ and noting that standard functionals have standard output for standard input.  
For the reverse implication, note that $\Paai$ implies $\paai$ and hence $(\mu^{2})$ by Theorem \ref{hugger}.  
As noted in the previous section (Footnote \ref{dagwrotethis} in particular), $\RCAo$ proves $[(\kappa^{3})+ (\exists^{2})]\di (\E_{2})$.
The latter functional clearly implies $(\mu_{1})$ and we are done.  
\end{proof}
Secondly, we prove that the functional $(\E_{2})$, which essentially expresses full second-order comprehension, is equivalent to $\SOT$.  
\begin{cor}
$\B_{0}+(\kappa^{3})$ proves $(\E_{2})\asa \SOT$; $\B_{0}^{-}+(\kappa^{3})$ does not.  
\end{cor}
\begin{proof}
The non-implication follows as in Theorem \ref{hugger}, while the forward implication is immediate by applying $\PFTPA$ to $(U_{2})$ and noting that standard functionals have standard output for standard input.  
For the reverse implication, note that $\SOT$ implies $\paai$ and hence $(\mu^{2})$ by Theorem \ref{hugger}.  
As noted in the previous section (Footnote \ref{dagwrotethis} in particular), $\RCAo$ proves $[(\kappa^{3})+ (\exists^{2})]\di (\E_{2})$.
\end{proof}
In conclusion, the previous suggests that any fragment of $\SOT$ which implies $\paai$ is equivalent to the comprehension axiom associated to this fragment.  
However, having $(\kappa^{3})$ in the base theory is not satisfactory, as we also obtain `superfluous' equivalences likes $(\exists^{2})\asa (\mu_{1})\asa (\E_{2})$.  
Hence, it is an interesting question how we can weaken $(\kappa^{3})$ to make sure these `superfluous' equivalences do not occur.  

\subsubsection{Nonstandard compactness}
Fifth, we prove an equivalence involving the nonstandard compactness of Cantor space and a conservative extension of $\WKL$, namely the \emph{special fan functional}, introduced in \cite{samGH} and studied in detail in \cite{dagsam}.   
\bdefi[Special fan functional]\label{dodier}
We define $\SCF(\Theta)$ as follows for $\Theta^{(2\di (0\times 1))}$:
\[
(\forall g^{2}, T^{1}\leq_{1}1)\big[(\forall \alpha \in \Theta(g)(2))  (\overline{\alpha}g(\alpha)\not\in T)
\di(\forall \beta\leq_{1}1)(\exists i\leq \Theta(g)(1))(\overline{\beta}i\not\in T) \big]. 
\]
Any functional $\Theta$ satisfying $\SCF(\Theta)$ is referred to as a \emph{special fan functional}.
\edefi
From a computability theoretic perspective, the main property of the special fan functional $\Theta$ is the selection of $\Theta(g)(2)$ as a finite sequence of binary sequences $\langle f_0 , \dots, f_n\rangle $ such that the neighbourhoods defined from $\overline{f_i}g(f_i)$ for $i\leq n$ form a cover of Cantor space;  almost as a by-product, $\Theta(g)(1)$ can then be chosen to be the maximal value of $g(f_i) + 1$ for $i\leq n$. We stress that $g^{2}$ in $\SCF(\Theta)$ may be \emph{discontinuous} and that Kohlenbach has argued for the study of discontinuous functionals in higher-order RM (See \cite{kohlenbach2}).  It is known that $\RCAo+(\exists \Theta)\SCF(\Theta)$ is conservative over $\WKL_{0}$ (See \cite{dagsam} or \cite{samGH}).  
Furthermore, the special fan functional naturally emerges from Tao's notion of \emph{metastability}, as discussed in \cite{sambrouw, samflo}.

\medskip

The special fan functional was originally derived from $\STP$, the \emph{nonstandard compactness} of Cantor space as in \emph{Robinson's theorem} (\cite{loeb1}*{}).  
This fragment of \emph{Standard Part} is also known as the `nonstandard counterpart of weak K\"onig's lemma' (\cite{keisler1}).  
\be\tag{$\STP$}
(\forall \alpha^{1}\leq_{1}1)(\exists^{\st}\beta^{1}\leq_{1}1)(\alpha\approx_{1}\beta),
\ee  
as explained by the equivalence between $\STP$ and \eqref{fanns}.  It is well-known that the axioms \emph{Transfer} and \emph{Standard Part} of Nelson's \textsf{IST} are independent (\cite{reeken}), and 
the same property apparently holds for the fragments $\Paai$ and $\STP$, in light of the non-implications in \eqref{nolinord}.  As a result, the Suslin functional does not suffice to prove the existence of the special fan functional, as discussed in Footnote~\ref{janarjan}. 
\begin{thm}\label{lapdog}
In $\B_{0}^{-}$, $\STP$ is equivalent to the following:
\begin{align}\label{frukkklk}
(\forall^{\st}g^{2})(\exists^{\st}w^{1}\leq_{1}1, k^{0})\big[(\forall T^{1}\leq_{1}1)\big( & (\forall \alpha^{1} \in w)(\overline{\alpha}g(\alpha)\not\in T)\\
&\di(\forall \beta\leq_{1}1)(\exists i\leq k)(\overline{\beta}i\not\in T)\big) \big], \notag
\end{align}  
as well as to the following:
\begin{align}\label{fanns}
(\forall T^{1}\leq_{1}1)\big[(\forall^{\st}n)(\exists \beta^{0})&(|\beta|=n \wedge \beta\in T ) \di (\exists^{\st}\alpha^{1}\leq_{1}1)(\forall^{\st}n^{0})(\overline{\alpha}n\in T)   \big].
\end{align}
Furthermore, $\B_{0}^{-}$ proves $(\exists^{\st}\Theta)\SCF(\Theta)\di \STP$.
\end{thm}
\begin{proof}
A detailed proof may be found in \cite{dagsam} or \cite{samGH}.  In a nutshell, the implication \eqref{frukkklk}$\leftarrow$\eqref{fanns} follows by taking the contraposition of the latter and introducing standard $g^{2}$ in the antecedent of the resulting formula.  
One then uses \emph{Idealisation} \textsf{I} to pull the standard quantifiers to the front and obtains \eqref{frukkklk}.  The other implication follows by pushing the standard quantifiers in the latter back inside.   
For the remaining implication $\STP\di \eqref{fanns}$ (the other one and the final part then being trivial), one uses \emph{overspill} (See \cite{benno}*{\S3}) to obtain a sequence of nonstandard length for a tree $T\leq_{1}1$ satisfying the antecedent of  \eqref{fanns}, and $\STP$ converts this sequence into a standard path in $T$.  
\end{proof}
As shown in \cite{dagsam}, the special fan functional can be computed (Kleene S1-S9 as in \cite{longmann}*{\S5.1.1}) in terms of $(\E_{2})$, but it cannot be computed by $(\mu^{2})$ or $(S^{2})$, \emph{or any type two functional}.
Similarly, while the base theory in the following theorem is strong, it cannot\footnote{If $\B_{0}+(S^{2})+\QFAC^{2,1}$ proves $\STP$, then by Theorems \ref{conservativity2} and \ref{massiveBT}, $\RCAo+(S^{2})+\QFAC$ proves $(\exists \Theta)\SCF(\Theta)$, but this is impossible by the results in \cite{dagsamIII}*{\S3}.\label{janarjan}} prove $\STP$ as suggested by the results in Section \ref{RM}.  
\begin{thm}\label{massiveBT}
The system $\B_{0}+(S^{2})+\QFAC^{2,1}$ proves $ \STP\asa (\exists \Theta)\SCF(\Theta)$, while the system $\B_{0}^{-}+(S^{2})+\QFAC$ does not.  
\end{thm}
\begin{proof}
The reverse implication is immediate using $\PFTPA$ and Theorem \ref{lapdog}.  
For the forward implication, $\STP$ implies \eqref{frukkklk} by the latter theorem.  Drop the second `st' in \eqref{frukkklk}, and apply $\PFTPA$ to the resulting formula to obtain
\begin{align}\label{frukkklk2}
(\forall g^{2})(\exists w^{1}\leq_{1}1, k^{0})\big((\forall T^{1}\leq_{1}1)\big[ & (\forall \alpha^{1} \in w)(\overline{\alpha}g(\alpha)\not\in T)\\
&\di(\forall \beta\leq_{1}1)(\exists i\leq k)(\overline{\beta}i\not\in T)\big] \big), \notag
\end{align} 
The formula in square brackets in \eqref{frukkklk2} is such that $T$ only occurs as $\overline{T}x$ where $x$ is at least the maximum of $k+1$ and $g(w(i))+1$ for $i<|w|$.  
Thus, let $A_{0}(g, w, k, T)$ be formula in square brackets in \eqref{frukkklk2} and $x_{0}$ the aforementioned maximum; we obtain $A_{0}(g, w, k, T)\asa (\exists x^{0}\geq x_{0})A_{0}(g, w, k, \overline{T}x)$.
In this light,  $(\forall T\leq_{1}1)A_{0}(g, w, k, T)$ has the right (equivalent) form to be decidable by the Suslin functional.  Thus, apply $\QFAC^{2,1}$ to \eqref{frukkklk2} to obtain $\Theta$ producing $w^{1}, k^{0}$ from $g^{2}$.  

\medskip

The non-implication follows from \cite{dagsam}*{Theorem 4.2} as the latter expresses that the special fan functional is not computable in any type two functional.  Indeed, $\STP$ is equivalent to \eqref{frukkklk} by Theorem \ref{lapdog} and applying Theorem \ref{consresultcor} to 
\[
\B_{0}^{-}+(S^{2})+\QFAC+(\exists \Theta)\SCF(\Theta)\vdash \eqref{frukkklk}, 
\]
one obtains a term $t$ of G\"odel's $T$ such that $\SCF(t)$, which is impossible.  
\end{proof}
Clearly, $(\kappa^{3})$ could be used in the proof instead of $(S^{2})$, but the former computes (Kleene S1-S9) the special fan functional while the latter does not (See \cite{dagsam}*{\S6.4}).  

\medskip

The same theorem is readily proved for \textsf{LMP}, the nonstandard counterpart of $\WWKL_{0}$ from \cite{pimpson}, and the associated `weak fan functional' $\Lambda$ from \cite{dagsam}*{\S3.3}.  

\medskip

Finally, we point out a result from \cite{samcie18}, namely an equivalence between $\STP$ and the Heine-Borel theorem \emph{in the general\footnote{The Heine-Borel theorem in RM is restricted to \emph{countable} covers (\cite{simpson2}*{IV.1}).} case}, i.e.\ the statement that any (possibly uncountable) 
open cover of the unit interval has a finite sub-cover.  In particular, any $\Psi:\R\di \R^{+}$ gives rise to a `canonical' open cover $\cup_{x\in [0,1]}I_{x}$ of $[0,1]$ where $I_{x}^{\Psi}\equiv (x-{\Psi(x)}, x+{\Psi(x)})$. 
Hence, the Heine-Borel theorem trivially implies: 
\be\label{zosimpelistnie}\tag{$\HBU$}\textstyle
(\forall \Psi:\R\di \R^{+})(\exists w^{1}){(\forall x\in [0,1])(\exists y\in w)(x\in I_{y}^{\Psi})}. 
\ee
As discussed in \cite{dagsamIII}*{\S1}, $\HBU$ is part of ordinary mathematics as it predates set theory.  
Furthermore, $\HBU$ is equivalent to many basic properties of the \emph{gauge integral} (\cite{dagsamIII}*{\S3.3}).  The latter is an extension of Lebesgue's integral and provides a (direct) formalisation of the 
Feyman path integral.  
\begin{thm}\label{coreBT}
The system $\B_{0}+(\exists^{2})+\QFAC^{2,1}$ proves that $\STP\asa \HBU$, while the system $\B_{0}^{-}+(\exists^3)+\QFAC$ does not.  
\end{thm}
\begin{proof}
See \cite{samcie18}*{\S3.2}.
\end{proof}
In conclusion, $\STP$ can be viewed as (a nonstandard version of) open-cover compactness, for \emph{uncountable} covers. 

\subsubsection{Nonstandard continuity}\label{Sectie9}
Sixth, we prove an equivalence involving the intuitionistic fan functional $\MUC$ and  nonstandard uniform continuity on Cantor space.  
\be\tag{$\MUC$}
(\exists \Omega^{3})(\forall Y^{2}) (\forall f, g\leq_{1}1)(\overline{f}\Omega(Y)=\overline{g}\Omega(Y)\notag \di Y(f)=Y(g)).  
\ee
Note that $\Omega$ computes a modulus of uniform continuity for every type two functional on Cantor space, i.e.\ $\MUC$ is inconsistent with classical mathematics as $(\exists^{2})$ is equivalent to the existence of discontinuous functionals (See \cite{kohlenbach2}*{\S3}).  The nonstandard counterpart of $\MUC$ is as follows:  
\be\tag{$\NUC$}
(\forall^{\st} Y^{2}) (\forall f, g\leq_{1}1)({f}\approx_{1}{g} \di Y(f)=Y(g)),   
\ee
which expresses that every standard type two functional is \emph{nonstandard} uniformly continuous on Cantor space.  Let $\MUC_{C}$ and $\NUC_{C}$ be the above principles with $Y^{2}$ restricted to `continuous' functionals $Y^{2}\in C$ as follows:   
\be\label{tooexplicit}
[Y^{2}\in C ] \equiv (\forall f^{1})(\exists N^{0})(\forall g^{1})(\overline{f}N=\overline{g}N \di Y(f)=Y(g)).
\ee
As noted above, the intuitionistic fan functional computes the special one, as well as $\NUC\di \STP$ (See \cite{samGH}*{\S3}), but the RM of $\MUC$ is much more elegant than the RM of the special fan functional in Theorem \ref{massiveBT}.
\begin{thm}\label{holBT}
The system $\B_{0}+\QFAC^{2,0}$ proves $ [(\kappa^{3})+\NUC]\asa \MUC$, while the system $\B_{0}^{-}+\QFAC$ does not prove $\MUC_{C}\di \NUC_{C}$.  
\end{thm}
\begin{proof}
The reverse implication follows by applying $\PFTPA$ to $\MUC$ and noting that for standard $\Omega^{3}$, $\Omega(Y)$ is standard for standard $Y^{2}$, and hence the latter is nonstandard uniformly continuous on Cantor space as in $\NUC$.  Furthermore, $\MUC$ clearly implies $(\kappa^{3})$.    
For the forward implication, assume $\NUC$ and resolve `$\approx_{1}$':
\[
(\forall^{\st} Y^{2}) (\forall f, g\leq_{1}1)((\forall^{\st}N)(\overline{f}N=\overline{g}N) \di Y(f)=Y(g)),  
\]
and bringing the standard quantifier outsideas far as possible, we obtain:
\[
(\forall^{\st} Y^{2}) (\forall f, g\leq_{1}1)(\exists^{\st}N)(\overline{f}N=\overline{g}N \di Y(f)=Y(g)).  
\]
Applying (the contraposition of) \emph{Idealisation} to the previous formula, we obtain 
\be\label{kuklokp}
(\forall^{\st} Y^{2})(\exists^{\st}M) (\forall f, g\leq_{1}1)(\exists N\leq M)(\overline{f}N=\overline{g}N \di Y(f)=Y(g)), 
\ee
which immediately yields the following by dropping one `\st':
\[
(\forall^{\st} Y^{2})(\exists M) (\forall f, g\leq_{1}1)(\overline{f}M=\overline{g}M \di Y(f)=Y(g)), 
\] 
and applying $\PFTPA$ finally brings us to:
\be\label{huraaah}
(\forall Y^{2})(\exists M^{0}) \underline{(\forall f, g\leq_{1}1)(\overline{f}M=\overline{g}M \di Y(f)=Y(g))}.  
\ee
Thanks to $(\kappa^{3})$, the underlined formula may be treated as quantifier-free;  applying $\QFAC^{2,0}$ to \eqref{huraaah} now yields $\MUC$.  

\medskip

The non-implication follows from \cite{noortje}*{Theorem 4.40} as the latter theorem implies that the (classical) fan functional is not given by a term from G\"odel's $T$.  
Indeed, $\NUC$ implies \eqref{kuklokp} and $\NUC_{C}$ similarly implies 
\be\label{koormaker}
(\forall^{\st} Y^{2}\in C)(\exists^{\st}M) (\forall f, g\leq_{1}1)(\exists N\leq M)(\overline{f}N=\overline{g}N \di Y(f)=Y(g)), 
\ee
as `$Y^{2}\in C$' is an internal formula by \eqref{tooexplicit}.  
However, any proof of the form
\be\label{tering}
\B_{0}^{-}+\MUC_{C}+\QFAC\vdash \eqref{koormaker}
\ee
yields a term of G\"odel's $T$ which computes the classical fan functional by applying Theorem \ref{consresultcor} to \eqref{tering}.  
\end{proof}
We do not know if $\PFTPA$ is essential in the equivalence, but do expect so.
Note that we avoided the use of the non-classical $\MUC$ in the non-implication in the theorem.  This is due to our use of \cite{noortje}*{Theorem 4.40} in the previous proof: the latter theorem is part of classical mathematics, and hence does not apply to $\MUC$.

\begin{rem}[On mathematical naturalness]\rm
We have proved the equivalence between respectively the comprehension\footnote{Note that $(\exists^{2})$ and $(S^{2})$ constitute comprehension principles; these are respectively equivalent to $(\mu^{2})$ and $(\mu_{1})$, modulo a dash of choice, namely $\QFAC^{1,1}$.} principles $(\mu^{2})$ and $(\mu_{1})$, and the Transfer principle limited respectively to $\Pi_{1}^{0}$ and $\Pi_{1}^{1}$-formulas, and this over a weak base theory based on $\efa+\WKL$.
Now, $(\exists^{2})$ and $(S^{2})$ correspond to two of the `Big Five systems', which leads us to the
following foundational claim regarding `mathematical naturalness' of logical systems, as qualified by Simpson in \cite{simpson2}*{I.12}:
\begin{quote}
From the above it is clear that the five basic systems $\RCA_{0}$, $\WKL_{0}$, $\ACA_{0}$, $\ATR_{0}$, $\FIVE$ arise naturally from investigations of the Main Question. The proof that these systems are mathematically natural is provided by Reverse Mathematics.
\end{quote}
Given the equivalences proved above, it would seem that $\paai$ and $\Paai$ also count as mathematically natural, in direct contrast to the view certain people\footnote{As discussed in \cite{samsynt}, both Alain Connes and Errett Bishop have expressed extremely negative (but unfounded) opinions of Nonstandard Analysis.} hold regarding Nonstandard Analysis.
In other words, \emph{Nonstandard Analysis never looked so standard}.
\end{rem}
\begin{ack}\rm
We are grateful to the anonymous referee for the many helpful suggestions, including the observation that $(\exists^{2})$ could be omitted from the base theory in Corollary \ref{horkings}, as well as the correction to \cite{yama1}*{Theorem 2.2} from Remark \ref{omdageaandringt}.
\end{ack}

\begin{bibdiv}
\begin{biblist}

\bib{avi2}{article}{
   author={Avigad, Jeremy},
   author={Feferman, Solomon},
   title={G\"odel's functional \(``Dialectica''\) interpretation},
   conference={
      title={Handbook of proof theory},
   },
   book={
      series={Stud. Logic Found. Math.},
      volume={137},
      publisher={North-Holland},
   },
   date={1998},
}

\bib{avi1}{article}{
author={Avigad, Jeremy},
author={Helzner, Jeffrey},
title={Transfer principles in nonstandard intuitionistic arithmetic},
year={2002},
journal={Archive for Mathematical Logic},
volume={41},
pages={581--602},
}

\bib{avi3}{article}{
  author={Avigad, Jeremy},
  title={Weak theories of nonstandard arithmetic and analysis},
  conference={ title={Reverse mathematics 2001}, },
  book={ series={Lect. Notes Log.}, volume={21}, publisher={Assoc. Symbol. Logic}, place={La Jolla, CA}, },
  date={2005},
  pages={19--46},
}


\bib{benno}{article}{
  author={van den Berg, Benno},
  author={Briseid, Eyvind},
  author={Safarik, Pavol},
  title={A functional interpretation for nonstandard arithmetic},
  journal={Ann. Pure Appl. Logic},
  volume={163},
  date={2012},
  number={12},
  pages={1962--1994},
}

\bib{bennoandeyvind}{unpublished}{
    AUTHOR = {van den Berg, Benno},
    author = {Briseid, Eyvind},
     TITLE = {Weak systems for nonstandard arithmetic},
     note = {In preparation},
}

\bib{damirzoo}{misc}{
   author={Dzhafarov, Damir D.},
   title={Reverse Mathematics Zoo},
 note={\url{http://rmzoo.uconn.edu/}},
}

\bib{farwise}{article}{
  author={Feferman, Solomon},
  title={Theories of Finite Type Related to Mathematical Practice},
  conference={ title={Handbook of mathematical logic}},
  note={Edited by Jon Barwise; Studies in Logic and the Foundations of Mathematics, Vol. 90},
  date={1977},
  pages={xi+1165},
}

\bib{fefja1}{article}{
   author={Feferman, Solomon},
   author={J{\"a}ger, Gerhard},
   title={Systems of explicit mathematics with non-constructive
   $\mu$-operator. II},
   journal={Ann. Pure Appl. Logic},
   volume={79},
   date={1996},
   number={1},
   pages={37--52},
}

\bib{fefja2}{article}{
   author={Feferman, Solomon},
   author={J{\"a}ger, Gerhard},
   title={Systems of explicit mathematics with nonconstructive
   $\mu$-operator. I},
   journal={Ann. Pure Appl. Logic},
   volume={65},
   date={1993},
   number={3},
   pages={243--263},
}

\bib{fega}{article}{
   author={Ferreira, Fernando},
   author={Gaspar, Jaime},
   title={Nonstandardness and the bounded functional interpretation},
   journal={Ann. Pure Appl. Logic},
   volume={166},
   date={2015},
   number={6},
   pages={701--712},
}

\bib{fried}{article}{
   author={Friedman, Harvey},
   title={Some systems of second order arithmetic and their use},
   conference={
      title={Proceedings of the International Congress of Mathematicians
      (Vancouver, B.\ C., 1974), Vol.\ 1},
   },
   book={
   },
   date={1975},
   pages={235--242},
}

\bib{fried2}{article}{
   author={Friedman, Harvey},
   title={ Systems of second order arithmetic with restricted induction, I \& II (Abstracts) },
  journal={Journal of Symbolic Logic},
  volume={41},
   date={1976},
   pages={557--559},
}

\bib{dslice}{book}{
   author={Hirschfeldt, Denis R.},
   title={Slicing the truth},
   series={Lecture Notes Series. Institute for Mathematical Sciences.
   National University of Singapore},
   volume={28},
   publisher={World Scientific Publishing},
   date={2015},
   pages={xvi+214},
}

\bib{hunter}{book}{
   author={Hunter, James},
   title={Higher-order reverse topology},
   note={Thesis (Ph.D.)--The University of Wisconsin - Madison},
   publisher={ProQuest LLC, Ann Arbor, MI},
   date={2008},
   pages={97},
}

\bib{loeb1}{book}{
   author={Hurd, Albert E.},
   author={Loeb, Peter A.},
   title={An introduction to nonstandard real analysis},
   series={Pure and Applied Mathematics},
   volume={118},
   publisher={Academic Press Inc.},
   place={Orlando, FL},
   date={1985},
   pages={xii+232},
}

\bib{ishi1}{article}{ 
author={Ishihara, Hajime},
title={Reverse mathematics in Bishop's constructive mathematics},
year={2006},
journal={Philosophia Scientiae (Cahier Sp\'ecial)},
volume={6},
pages={43-59}
}

\bib{keisler1}{article}{
   author={Keisler, H. Jerome},
   title={Nonstandard arithmetic and reverse mathematics},
   journal={Bull. Symb.\ Logic},
   volume={12},
   date={2006},
   pages={100--125},
}

\bib{kohlenbach4}{article}{
   author={Kohlenbach, Ulrich},
   title={Foundational and mathematical uses of higher types},
   conference={
      title={Reflections on the foundations of mathematics (Stanford, CA,
      1998)},
   },
   book={
      series={Lect. Notes Log.},
      volume={15},
      publisher={ASL},
   },
   date={2002},
   pages={92--116},
}

\bib{kohlenbach7}{article}{
   author={Kohlenbach, Ulrich},
   title={On uniform weak K\"onig's lemma},
   note={Symposium Dedicated to Anne S. Troelstra
   (Noordwijkerhout, 1999)},
   journal={Ann. Pure Appl. Logic},
   volume={114},
   date={2002},
   number={1-3},
   pages={103--116},
}

\bib{kohlenbach2}{article}{
   author={Kohlenbach, Ulrich},
   title={Higher order reverse mathematics},
   conference={
      title={Reverse mathematics 2001},
   },
   book={
      series={Lect. Notes Log.},
      volume={21},
      publisher={Assoc. Symbol. Logic},
      place={La Jolla, CA},
   },
   date={2005},
   pages={281--295},
}

\bib{kohlenbach3}{book}{
   author={Kohlenbach, Ulrich},
   title={Applied proof theory: proof interpretations and their use in
   mathematics},
   series={Springer Monographs in Mathematics},
   publisher={Springer-Verlag},
   place={Berlin},
   date={2008},
   pages={xx+532},
}

\bib{longmann}{book}{ 
author={Longley, John},
author={Normann, Dag},
title={Higher-order Computability},
year={2015},
publisher={Springer},
series={Theory and Applications of Computability},
}

\bib{mandje2}{article}{
   author={Mandelkern, Mark},
   title={Brouwerian counterexamples},
   journal={Math. Mag.},
   volume={62},
   date={1989},
   number={1},
   pages={3--27},
}

\bib{montahue}{article}{
   author={Montalb{\'a}n, Antonio},
   title={Open questions in reverse mathematics},
   journal={Bull. Symbolic Logic},
   volume={17},
   date={2011},
   number={3},
   pages={431--454},
}

\bib{mummy}{article}{
   author={Mummert, Carl},
   author={Simpson, Stephen G.},
   title={Reverse mathematics and $\Pi_2^1$ comprehension},
   journal={Bull. Symbolic Logic},
   volume={11},
   date={2005},
   number={4},
   pages={526--533},
}

\bib{wownelly}{article}{
   author={Nelson, Edward},
   title={Internal set theory: a new approach to nonstandard analysis},
   journal={Bull. Amer. Math. Soc.},
   volume={83},
   date={1977},
   number={6},
   pages={1165--1198},
}

\bib{noortje}{book}{
   author={Normann, Dag},
   title={Recursion on the countable functionals},
   series={LNM 811},
   volume={811},
   publisher={Springer},
   date={1980},
   pages={viii+191},
}

\bib{dagsam}{article}{
   author={Normann, Dag},
   author={Sanders, Sam}
   title={Nonstandard Analysis, Computability Theory, and their connections},
   journal={Submitted, Available from arXiv: \url{https://arxiv.org/abs/1702.06556}},
   date={2017},
}

\bib{dagsamII}{article}{
   author={Normann, Dag},
   author={Sanders, Sam}
   title={The strength of compactness in Computability Theory and Nonstandard Analysis },
   journal={Submited, arXiv: \url{https://arxiv.org/abs/1801.08172}},
   date={2017},
}

\bib{dagsamIII}{article}{
  author={Normann, Dag},
  author={Sanders, Sam},
  title={On the mathematical and foundational significance of the uncountable},
  journal={To appear in \emph{Journal of Mathematical Logic}, arXiv: \url {https://arxiv.org/abs/1711.08939}},
  date={2018},
}

\bib{reeken}{book}{ 
author={Kanovei,Vladimir},
author={Reeken, Michael},
title={Nonstandard analysis, axiomatically},
year={2004},
publisher={Springer}
}

\bib{yama1}{article}{
   author={Sakamoto, Nobuyuki},
   author={Yamazaki, Takeshi},
   title={Uniform versions of some axioms of second order arithmetic},
   journal={MLQ Math. Log. Q.},
   volume={50},
   date={2004},
   number={6},
   pages={587--593},
}

\bib{sambon}{article}{
  author={Sanders, Sam},
  title={The unreasonable effectiveness of Nonstandard Analysis},
  year={2016},
  journal={Submitted to the \emph{APAL} special issue of Logical Foundations for Computer Science 2016, \url {http://arxiv.org/abs/1508.07434}},
}

\bib{samsynt}{article}{
  author={Sanders, Sam},
  title={Formalism16},
  year={2017},
  journal={Synthese, S.I.: Foundations of Mathematics},
  pages={1-42},
}

\bib{samGH}{article}{ 
author={Sanders, Sam},
title={The Gandy-Hyland functional and a hitherto unknown computational aspect of Nonstandard Analysis},
year={2017},
journal={To appear in \emph{Computability}, Available from arXiv: \url{http://arxiv.org/abs/1502.03622}},
}

\bib{samflo}{article}{
  author={Sanders, Sam},
  title={Metastability and higher-order computability},
  year={2018},
  journal={Proceedings of LFCS18, Lecture Notes in Computer Science, Springer},
}

\bib{samcie18}{article}{
  author={Sanders, Sam},
  title={Some nonstandard equivalences in Reverse Mathematics},
  year={2018},
  journal={Proceedings of CiE18, Lecture Notes in Computer Science, Springer},
}

\bib{sambrouw}{article}{
   author={Sanders, Sam},
   title={To be or not to be constructive},
 journal={\emph{Indagationes Mathematicae} and the Brouwer volume \emph{L.E.J. Brouwer, fifty years later}},
   date={2017 and 2018},
   pages={pp.\ 69},
}

\bib{simpson1}{collection}{
   title={Reverse mathematics 2001},
   series={Lecture Notes in Logic},
   volume={21},
   editor={Simpson, Stephen G.},
   publisher={Association for Symbolic Logic},
   place={La Jolla, CA},
   date={2005},
   pages={x+401},
}

\bib{simpson2}{book}{
  author={Simpson, Stephen G.},
  title={Subsystems of second order arithmetic},
  series={Perspectives in Logic},
  edition={2},
  publisher={Cambridge University Press},
  place={Cambridge},
  date={2009},
  pages={xvi+444},
}

\bib{pimpson}{article}{
   author={Simpson, Stephen G.},
   author={Yokoyama, Keita},
   title={A nonstandard counterpart of \textsf{\textup{WWKL}}},
   journal={Notre Dame J. Form. Log.},
   volume={52},
   date={2011},
   number={3},
   pages={229--243},
}

\bib{tanaka1}{article}{
   author={Tanaka, Kazuyuki},
   title={The self-embedding theorem of ${\rm WKL}_0$ and a non-standard
   method},
   note={Fifth Asian Logic Conference (Singapore, 1993)},
   journal={Ann. Pure Appl. Logic},
   volume={84},
   date={1997},
   number={1},
   pages={41--49},
}

\bib{troelstra73}{book}{
  title = {Metamathematical investigation of intuitionistic arithmetic and analysis},
  publisher = {Springer-Verlag},
  year = {1973},
  editor = {Troelstra, Anne S.},
  pages = {xvii+485},
  series = {Lecture Notes in Mathematics, Vol. 344},
  address = {Berlin},
}

\bib{troelstravandalen88b}{book}{
    AUTHOR = {Troelstra, Anne S.},
    author = {van Dalen, Dirk},
     TITLE = {Constructivism in mathematics. {V}ol. {II}},
    SERIES = {Studies in Logic and the Foundations of Mathematics},
    VOLUME = {123},
       PUBLISHER = {North-Holland},
   ADDRESS = {Amsterdam},
      YEAR = {1988},
     PAGES = {i--xviii and 345--880 and I--LII},
}

\end{biblist}
\end{bibdiv}
\bye